\numberwithin{equation}{section}
\newtheorem{thm}{Theorem}[section]
\newtheorem{prop}[thm]{Proposition}
\newtheorem{lem}[thm]{Lemma}
\newtheorem{cor}[thm]{Corollary}
\newtheorem*{ithm}{Theorem}
\crefname{thm}{Theorem}{Theorems}
\crefname{prop}{Proposition}{Propositions}
\crefname{lem}{Lemma}{Lemmas}
\crefname{cor}{Corollary}{Corollaries}
\crefname{claim}{Claim}{Claims}
\crefname{step}{Step}{Steps}
\theoremstyle{definition}
\newtheorem{defn}[thm]{Definition}
\newtheorem{ex}[thm]{Example}
\crefname{defn}{Definition}{Definitions}
\crefname{ex}{Example}{Examples}
\theoremstyle{remark}
\newtheorem{rmk}[thm]{Remark}
\crefname{rmk}{Remark}{Remarks}
\newcommand{\pair}[2]{\ensuremath\langle#1,#2\rangle}
\newcommand{\norm}[1]{\ensuremath\lVert#1\rVert}
\newcommand{\defeq}{\vcentcolon=}
\newcommand{\eqdef}{=\vcentcolon}
\newcommand{\trinorm}[1]{\ensuremath|\!|\!|#1|\!|\!|}
\let\temp\phi
\let\phi\varphi
\let\varphi\temp
\let\temp\varepsilon
\let\varepsilon\epsilon
\let\epsilon\temp
\DeclareMathOperator{\tr}{tr}
\newcommand{\N}{\mathbb{N}}
\newcommand{\Pc}{\mathcal{P}}
\newcommand{\Z}{\mathbb{Z}}
\newcommand{\de}{\partial}
\newcommand{\di}{\mathrm{d}}
\newcommand{\R}{{\mathbb{R}}}
\newcommand{\call}[1]{\ensuremath\mathcal{#1}}
\newcommand{\frk}[1]{\ensuremath\mathfrak{#1}}
\newcommand{\bb}[1]{\ensuremath\mathbb{#1}}
\newcommand\Sym{\mathscr{S}}
\newcommand\trn\intercal
\newcommand\loc{\mathrm{loc}}
\newcommand{\var}{\mspace{1.8mu}\cdot\mspace{1.8mu}}
\newcommand\DSet[1]{\ensuremath[\![#1]\!]}
\newcommand\mres\llcorner
\setlist[itemize]{left=1em, itemsep=1.5pt}
\setlist[enumerate]{left=1em, itemsep=1.5pt}
\begin{document}

\date{\today}

\title[Local existence and uniqueness for infinite-dim.~Nash systems]{Short-time existence and uniqueness for some infinite-dimensional Nash systems}
\author[D.\ F.\ Redaelli]{Davide Francesco Redaelli}
\address{Dipartimento di Matematica \\ Università di Roma Tor Vergata \\ Via della Ricerca Scientifica 1\\ 00133 Roma, Italy}
\email{redaelli@mat.uniroma2.it}

\begin{abstract}
We prove local (in time) existence and uniqueness for a class of infinite-dimensional Nash systems, namely systems of infinitely many Hamilton--Jacobi--Bellman equations set in an infinite-dimensional Euclidean space. Such systems have been recently showed to arise in the theory of stochastic differential games with interactions governed by sparse graphs by Cirant and the author [{\em Dyn.\ Games Appl.}, 15 (2025), pp.\ 558--591], under structural assumptions that inspired the hypotheses exploited in the present work. Contextually, we also prove a general linear result, providing a priori estimates, stable with respect to the dimension, for transport-diffusion equations whose drifts (and their derivatives) enjoy appropriate decay properties.
\end{abstract}

\maketitle


\section{Introduction}

We consider the following system of infinitely many backward parabolic differential equations of Hamilton--Jacobi--Bellman (HJB) type, which we refer to as an \emph{infinite-dimensional Nash system}:
\begin{equation} \label{tdli_ns}
\begin{dcases} -\de_t u^i - \sum_{jk} A^{jk}(t,x) D^2_{jk} u^i + H^i(t,x, \call Du) + \sum_{j\neq i} \de_{p^j} H^j(t,x,\call Du) D_ju^i = 0 \\
u^i(T,\cdot) = G^i\,, \qquad i \in \N\,,
\end{dcases} 
\end{equation}
where $\call Du \defeq (D_ju^j)_{j\in\N}$, set in $\R^\omega_T \defeq [0,T] \times \R^\omega$, with $\R^\omega$ being the space $\R^\N$ equipped with the product topology.

The unknowns are the \emph{value functions} $u^i \colon \R^\omega_T \to \R$; the data are the \emph{horizon} $T$, the \emph{diffusion} $A = (A^{jk})_{j,k \in \N} \colon [0,T] \to \R^{\omega \times \omega}$, the \emph{Hamiltonians} $H^i \colon \R^\omega_T \times \R^{\omega} \to \R$ and the \emph{terminal costs} $G^i \colon \R^\omega \to \R$.

The notation $\R^{\omega \times \omega}$ indicates infinite-dimensional square matrices, or equivalently the space $(\R^\omega)^\omega$, in analogy with the notation $\R^{N \times N}$ for $N \times N$ matrices. The typical element of $[0,T] \times \R^\omega \times \R^{\omega}$ is denoted by $(t,x,p)$, with coordinates $x^j$ and $p^j$, $j \in \N$. Also, we will use the notation $\R^\omega_x$ and $\R^\omega_p$ when we need to specify which copy of $\R^\omega$ we are considering, while $\R^\omega_T$ will always mean $[0,T] \times \R^\omega_x$.

Derivatives are understood in the sense of Gateaux. Recall that given $k$ vectors $v^1,\dots,v^k \in \R^\omega$, the $k$-th Gateaux derivative of $V$ along them is recursively defined by
\[
D^kV(x;v^1,\dots,v^k) \defeq \frac{\de}{\de s}\bigg|_{0} D^{k-1}V(x+sv^k;v^1,\dots,v^{k-1})\,.
\]
We denote by $\de_t$ the derivative with respect to $t$, by $D_j$ the derivative with respect $x^j$ -- that is, with respect to $x$ along $e_j$ -- and by $\de_{p^{j}}$ the derivative with respect to $p^{j}$.

\smallskip

Finite-dimensional systems like \eqref{tdli_ns} -- that is, with $i \in \DSet{N} \defeq \{0,\dots,N-1\}$ and thus set in $\R^N_T \defeq [0,T] \times \R^N$ -- naturally arise to describe closed-loop Nash equilibria in stochastic differential $N$-player games, and they are notoriously hard to study due to the strong coupling Hamiltonian terms. Let us briefly recall how this happens, referring the reader to \cite{cardelar,Friedman72} for further details.

Consider a game in which the vector $X_t = (X^i_t)_{i \in \DSet N}$ of the states of the players obeys the following $\R^N$-valued SDE on $[0,T]$:
\[
\di X_t = B(t,X_t,\alpha_t)\,\di t + \Sigma(t,X_t) \,\di W_t\,,
\]
where $B = (B^i)_{i \in \DSet N} \colon [0,T] \times \R^N \times \R^N \to \R^N$ is the given vector of the \emph{drifts}, $\alpha = (\alpha^i)_{i \in \DSet N} \colon [0,T] \to \R^N$ is the vector of closed-loop controls in feedback form (that is, $\alpha_t^i = \varphi^i(t,X_t)$ for some $\varphi^i \colon [0,T] \times \R^N \to \R$), $\Sigma \colon [0,T] \times \R^N \to \R^{N \times M}$ is the \emph{volatility} matrix and $W$ is an $\R^M$-valued Brownian motion. Suppose that each player $i$ aims at minimising a cost of the form
\[
J^i(\alpha) = \bb E \biggl[\, \int_0^T F^i(t,X_t,\alpha_t) \,\di t + G^i(X_T) \biggl]
\]
and that there exists a unique $\alpha^{*}(t,x,q)$ (with $q \in (\R^N)^N$) such that, for each $i \in \DSet N$,
\[
\alpha^{*i}(t,x,q) \in \arg\min_{\alpha^i} \Bigl(B\bigl(t,x,(\alpha^i,\alpha^{*,-i}(t,x,q))\bigr) \cdot q^i + F^i\bigl(t,x,(\alpha^i,\alpha^{*,-i}(t,x,q))\bigr)\Bigr)\,.
\]
Then one defines the Hamiltonians
\begin{equation} \label{NST_defHi}
\tilde H^i(t,x,q) \defeq -B(t,x,\alpha^*(t,x,q))) \cdot q^i - F^i(t,x,\alpha^{*}(t,x,q))\,,
\end{equation}
and the optimal controls -- in the sense of Nash equilibrium -- for the game are expected to be given by $\alpha^*(t,x,Du)$, where the so-called value functions $u^i \colon [0,T] \times \R^N \to \R$ solve the Nash system
\begin{equation} \label{NST_preNS}
-\de_t u^i - \tr\bigl(A(t,x) D^2 u^i\bigr) + \tilde H^i(t,x,Du) = 0\,,
\end{equation}
with $A = \frac12\Sigma\Sigma^\trn$ and terminal condition $u^i(T,\var) = G^i$. Now, by the envelope theorem (see, e.g., \cite[Lemma~1.1]{CPnotes}) one has $B(t,x,\alpha^*(t,x,q)) = - \de_{q^i} \tilde H^i(t,x,q)$; therefore, if in addition $B^i$ is independent of $\alpha^{-i}$, then we can consider $\alpha^{*i}$ (and thus also $H^i$) that is independent of $q^{jk}$ for $j \neq k$. In this situation one gets to a system of the form \eqref{tdli_ns} by letting $H^i(t,x,(q^{jj})_{j}) \defeq - B^i(t,x,\alpha^{*i}(t,x,(q^{jj})_{j}))\, q^{ii} - F^i(t,x,\alpha^*(t,x,(q^{jj})_{j}))$.

Note that in this setting we are considering each state $X^i_t$ to be one-dimensional, while in general one could consider $X^i_t \in \R^d$. Our choice is only made for the sake of a simpler notation, as the willing reader can check that our results can be easily extended to the $d$-dimensional case, where one has $(\R^d)^\omega$ instead of $\R^\omega$.

\smallskip

In general, system~\eqref{NST_preNS} is not well-defined for $i \in \N$, and thus in $[0,T] \times \R^\N$; when formally writing its infinite-dimensional analogue, two series appear which are not guaranteed to converge: one coming from the trace and another one carried by the Hamiltonian due to the scalar product in its definition~\eqref{NST_defHi}.

The most notable example that such a system cannot be taken to the limit ``as it is'' is offered by the Mean Field Game (MFG) theory, developed in the first decade of the century independently by Lasry and Lions~\cite{LL07} and Caines, Huang and Malhamé~\cite{hcm} to study Nash equilibria in games with many players satisfying certain symmetry and negligibility assumptions inspired by the mean field paradigm of statistical mechanics. As described in \cite{CDLL} expanding ideas of \cite{LionsSeminar}, in the MFG setting the natural limit of the Nash system is provided by the Master Equation, a PDE of hyperbolic nature set in the space $[0,T] \times \R \times \Pc(\R)$, where $\Pc$ stands for probability measures. In other words, we can say that, exploiting the structural hypotheses of MFG and letting $N \to \infty$ one passes from $\R^N$ to $\R \times \Pc(\R)$ instead of $\R^\N$.

On the other hand, in recent years other settings for large populations games than the MFG one have been studied with rising interest. In particular, in \emph{network games} one drops the general symmetry and negligibility assumptions of MFGs, which basically lead to assume that the functions $B$, $F$ and $G$ above depend on $X$ and $i$ only through $X^i$ and the mean $\frac1{N-1} \sum_{j \neq i} X^j$; instead, it is assumed that interactions are governed by a graph $\Gamma = (V,E)$, with $\#V = N$, in such a way that players are labelled by $v \in V$ and, for example, $G^v(X) = G^v(X^v, \sum_{(v',v) \in E} w(v,v') X^{v'})$ for some weights $w$.

When, as one lets $N \to \infty$, the corresponding sequence of graphs consists of \emph{dense} graphs converging -- in an appropriate sense -- to a limit object which is a \emph{graphon}, it is possible to find conditions that allow to pass to the limit in a spirit analogous to that of MFGs (basically exploiting suitable generalisations of the Law of Large Numbers), thus entering the theory of Graphon Mean Field Games (GMFGs); see, e.g., \cite{Carmona2,BWZ,CainesHuang,LackerSoretLabel} and the references therein.

Relatively few results are available, instead, for \emph{sparse} graphs; the reader can have a look at \cite{CainesIEEE22,CainesHuACC24,CainesHuIEEE24,LackRamWu} and references therein. Studying large population games governed by such graphs is intrinsically more difficult due to the general lack of any overall effect that could come into play as the number of players increases. In (G)MFGs (most of) the other players have an asymptotically negligible impact on a given one, which is though influenced by the mean distribution of them altogether, thus making their average not negligible in the large population limit. One the other hand, if for instance the sequence of graphs has eventually constant maximum degree, then a given player is not affected at all -- at least directly -- by most of the others. What one expects in this case is that mutually distant players -- in the sense of the graph -- should have an unimportant impact on one another, so that a characteristic property of such games should be the possibility for a player to neglect all those too far from it and still play almost optimally. This situation is indeed antipodal to that of (G)MFGs, since a natural way to obtain $\epsilon$-Nash equilibria for games with many players turns out to be cutting some of them out of the game instead of supposing them to be infinitely many in order to exploit the aforementioned typical effects occurring in (G)MFG theory.

\smallskip

The above claims have been recently investigated and proved to be true by the author and M.~Cirant in peculiar Linear-Quadratic (LQ) settings for closed-loop equilibria \cite{CR24}, as well as in more general sparse-graph-based open-loop games \cite[Chapter~1]{PHDT}. In particular, the way of depicting the \emph{unimportance of distant players} proposed in \cite{CR24} consists in showing that, if the graphs have vertices $V = \DSet N$, then $D_j u^i \to 0$ as $N,|i-j| \to \infty$ ($u$ being the solution to the corresponding Nash system), whereas in (G)MFGs one has $D_j u^i \to 0$ for any $j \neq i$. Indeed, note that the derivative of the $i$-th value function along the direction $j$ vanishing is interpreted as the $i$-th player's optimal behaviour disregarding the $j$-th player, and, more in general, a small absolute value of $D_j u^i$ is associated to a small impact of the $j$-th player's behaviour on the $i$-th player's optimal strategy.

As a byproduct, we showed that there exist structural conditions under which $D_j u^i$ vanish fast enough for the series appearing in the formally-written infinite-dimensional Nash system to be convergent. In other words, a consequence of the unimportance of distant players can be the possibility to preserve the formal structure of the Nash system in the limit $N \to \infty$. This is done in \cite{CR24} in a LQ framework, so the purpose of the present work is to make use of the takeaways of that case study in order to formulate suitable assumptions to prove short-time existence and uniqueness for more general infinite-dimensional Nash systems.

More precisely, the strategy in \cite{CR24} is based on the existence of a positive \emph{c-self-controlled} sequence $\beta \in \ell^1(\Z)$ -- the \emph{c} standing for \emph{convolution} -- providing bounds for the derivatives on the data; that is, for instance,
\begin{equation} \label{tdli_cscbound}
|D^2_{hk} G^i| \lesssim \beta^{i-h}\beta^{i-k}\,,
\end{equation}
with $\beta$ satisfying the peculiar property to control its discrete self-convolution:
\[
(\beta \star \beta)^i \defeq \sum_{j \in \Z} \beta^{i-j} \beta^j \lesssim \beta^i \qquad \forall\, i \in \Z\,. 
\]
Thinking of the simpler form of \eqref{tdli_ns} where the equations are given by
\[
-\de_t u^i - \Delta u^i + \frac12|D_i u^i|^2 + \sum_{j \neq i} D_j u^j \cdot D_j u^i = 0\,,
\]
the reader can have a hint that \eqref{tdli_cscbound}, if proved to hold for $u^i$ as well, allows to obtain nice a priori estimates by efficiently handling the strongly coupled terms $\sum_{j \neq i} D_j u^j \cdot D_j u^i$.

\smallskip

Our main result is stated as \Cref{tdli_thmone}; we reproduce it here for the convenience of the reader and of the following discussion.

\begin{ithm} 
Let $\beta \in \ell^{\frac12}(\bb Z;\R_+)$ be even and such that $\beta \star \beta \leq c\beta$. For any $i \in \N$, define $\beta_i \defeq \beta^{i-\var} \in \ell^{\frac12}(\Z,\R_+)$. Assume that the following hypotheses are fulfilled, uniformly in the parameters $i,j,k \in \N$:
\begin{itemize}[leftmargin=*]
\item $A \colon \R^\omega_T \to \ell^\infty(\N^2)$ is uniformly positive on $\ell^2(\N)$, with $A^{ij} = A^{ji} \in C^{\frac\gamma2,\gamma}(\R^\omega_T)$ for some $\gamma \in (0,1)$, $D_k A^{ij} = 0$ unless $i=j=k$, and $D_k A^{kk} \in C^{\frac\gamma2,\gamma}(\R^\omega_T) \cap C^0([0,T];C^{2}(\R^\omega))$;
\item $G^i \in C^{3+\gamma-}_{\beta_i}(\R^\omega)$;
\item $\de_{p^i} H^i \in C^0([0,T];C^2_{\beta_i}(\R^\omega_x \times \Omega)) \cap C^0(\R^\omega_T;C^{3-}_{\beta_i}(\Omega))$ for any $\infty$-bounded $\Omega \subset \R^\omega_p$,\footnote{That is, $\sup_\Omega |\var|_\infty \defeq \sup_{p\in\Omega} \sup_i |p^i| < \infty$.} also $D_j \de_{p^i} H^i,\, \de_{p^j} \de_{p^i} H^i \in C^0(\Omega;C^{\frac\gamma2,\gamma}(\R^\omega_T))$.
\end{itemize}

Then there exists $T^*>0$ such that if $T < T^*$ the Nash system~\eqref{tdli_ns} has a unique classical solution $u$, which also belongs to $\ell^\infty(\N;C^0([0,T];C^{2,1}_{\beta_i}(\R^\omega)) \cap C^{\frac12}([0,T];C^{2-}_{\sqrt{\beta_i}}(\R^\omega)))$.
\end{ithm}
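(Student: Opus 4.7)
The plan is to set up a short-time fixed-point scheme. For a candidate tuple $v = (v^i)_{i \in \N}$ in a suitable closed ball of the space $\scr{X}_T \defeq \ell^\infty(\N;C^0([0,T];C^{2,1}_{\beta_i}(\R^\omega)))$, I would freeze the nonlinearity and consider, for each $i$, the scalar linear backward parabolic equation
\[
-\de_t u^i - \sum_{j,k} A^{jk}(t,x)\, D^2_{jk} u^i + \sum_{j \ne i} b^j_v(t,x)\, D_j u^i = f^i_v(t,x),\qquad u^i(T,\var) = G^i,
\]
with drift $b^j_v(t,x) \defeq \de_{p^j} H^j(t,x,\call Dv)$ and source $f^i_v(t,x) \defeq -H^i(t,x,\call Dv)$. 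Once $v$ is fixed the system decouples over $i$, and each scalar equation is a transport-diffusion equation of exactly the type covered by the general linear a priori result announced in the abstract; applying that result yields $u^i = \Phi^i(v)$ together with quantitative bounds on $D_h u^i$ and $D^2_{hk} u^i$ tracking the weights $\beta_i$.

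The structural fact that makes the weighted estimates close is the convolutional self-control $\beta \star \beta \leq c\beta$. Indeed, $D_h u^i$ is expected to decay like $\beta^{i-h}$, while the assumption $\de_{p^i} H^i \in C^2_{\beta_i}$ together with the chain rule implies $|D_k b^j_v(t,x)| \lesssim \beta^{j-k}$ plus terms involving $D^2 v^\ell$ which also carry $\beta_\ell$-decay. The Leibniz rule applied to $\sum_{j \ne i} b^j_v D_j u^i$ then generates sums of the form $\sum_j \beta^{i-j}\beta^{j-k}$, whose bound by $c\beta^{i-k}$ is exactly the hypothesis on $\beta$. A parallel argument for $f^i_v = -H^i(\cdot,\call Dv)$ shows that $\Phi$ maps a closed ball of $\scr{X}_T$ into itself, with the size of the image depending on the terminal datum plus a factor that vanishes as $T \to 0$.

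For contraction, $\Phi(v_1) - \Phi(v_2)$ solves a linear equation whose source and drift perturbation are controlled, via the $C^{3-}_{\beta_i}$-type regularity of $H^i$ and $\de_{p^i} H^i$, by $\call D v_1 - \call D v_2$; reusing the linear estimate together with $\beta \star \beta \leq c\beta$ to absorb the coupling produces an operator norm $O(T)$, hence a contraction for $T < T^*$. The Banach fixed-point theorem then yields a unique $u$, and the stated classical regularity follows from the linear result together with the equation itself: reading $\de_t u^i$ off \eqref{tdli_ns} and estimating the product sums $D_j u^j\, D_j u^i$ by Cauchy--Schwarz gives the Lipschitz-in-time control in the weaker weight $\sqrt{\beta_i}$, which is why the hypothesis is $\beta \in \ell^{1/2}$ (so that $\sqrt{\beta} \in \ell^1$).

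The principal obstacle is closing the weighted a priori estimates uniformly in $i$, and stably with respect to finite-dimensional truncations. The coupling $\sum_{j \ne i} \de_{p^j} H^j(\cdot,\call Du)\, D_j u^i$ mixes derivatives across every coordinate direction, and bounding it requires simultaneously propagating the $\beta_i$-decay of $D_j u^i$ and the $\beta_j$-decay of the coefficients $D_k b^j_v$. It is here that $\beta \star \beta \leq c\beta$ is indispensable: it is precisely the condition ensuring that neither chain-rule nor Leibniz-rule compositions degrade the weight class along the iteration. Tracking the resulting constants carefully — and keeping them independent of the truncation dimension — is the delicate bookkeeping that fixes the size of $T^*$.
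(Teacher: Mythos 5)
Your high-level plan has the right ingredients — linearize and freeze the nonlinearity, invoke the dimension-stable weighted estimates of the linear proposition, and use $\beta\star\beta\le c\beta$ to close the Leibniz/chain-rule sums — and your observation that $\beta\in\ell^{1/2}$ is what drives the $C^{0,1}([0,T];C^{2-}_{\sqrt{\beta_i}})$ regularity through the equation matches the paper's \Cref{NST_cord2lipt}. However, there is a genuine gap in how you turn this into an existence proof. You set up the contraction directly in $\scr X_T=\ell^\infty(\N;C^0([0,T];C^{2,1}_{\beta_i}(\R^\omega)))$, which requires you to actually \emph{solve} each frozen-coefficient linear equation in the infinite-dimensional domain $\R^\omega$; but \Cref{tdli_proppret} is an a priori estimate for the equation on $\R^N$ — it tells you nothing about well-posedness of the linear problem on $\R^\omega$, which is not covered by classical parabolic theory. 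The paper sidesteps this by running the whole fixed-point scheme in $\R^N_T$ (where Krylov's classical theory gives existence for the frozen linear system), proving that the fixed point $u_N$ exists for each $N$ with bounds uniform in $N$, then extracting a convergent subsequence by Ascoli--Arzelà and a diagonal argument, and finally checking that the limit satisfies the infinite-dimensional system, with the defining series converging \emph{uniformly in $i$}; that last verification is a nontrivial step. You name the truncation issue at the end, but your scheme as written never actually performs the truncation-and-limit.

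Two further mismatches are worth flagging. First, you claim uniqueness via Banach's theorem, but a contraction only gives uniqueness within the chosen ball; the theorem asserts uniqueness among \emph{all} classical solutions in the sense of \Cref{NST_csoldef}. The paper proves this separately (\Cref{NST_thmex}) by a duality argument against the Fokker--Planck--Kolmogorov equation, where \Cref{tdli_lem:kfp} supplies the required control on $\int|D_k\rho|$. Second, your decomposition puts the \emph{whole} $H^i(\cdot,\call Dv)$ into the source $f^i_v$, so the frozen source retains the $D_iv^i$-dependence of $H^i$. The paper is more careful: it applies the fundamental theorem of calculus in the $p^i$ variable, taking $\hat B^i_i(t,x,p)=\int_0^1\de_{p^i}H^i(t,x,p^{-i},sp^i)\,ds$ as the $i$-th drift component and leaving only $H^i(t,x,\call Du^{-i},0)$ (with vanishing $i$-th slot) as the source. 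This peels the diagonal $D_iu^i$-dependence off the source and places it in the drift, which is precisely the form for which the hypotheses on $\de_{p^i}H^i$ (rather than on $H^i$ itself) are tailored and for which $\tilde K(T)\to0$ in the linear estimate can be exploited. Keeping the full $H^i$ in the source is not obviously fatal, but it is a coarser decomposition, and it would make it harder to tie the required decay of the source to the stated hypotheses.
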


The uniform positivity of $A(t,x) \in \ell^\infty(\N^2)$ means that there exists $\lambda \in \R_+$ such that $A(t,x)\xi\cdot\xi \geq \lambda|\xi|^2$ for all $\xi \in \ell^2(\N)$, $(t,x) \in \R^\omega_T$, so it is a uniform ellipticity assumption for the associated differential operator appearing in the Nash system. The spaces of functions appearing in the above statement are defined and discussed in \Cref{NST_secholsp}; let us just mention that they are convenient generalisations of classic H\"{o}lder spaces on the infinite-dimensional domain $\R^\omega$, possibly weighted according to the sequence $\beta$, which is an efficient tool to deal with the diffusion and -- most importantly -- the drift terms, as previously mentioned. For instance, $G^i \in C^{3+\gamma-}_{\beta_i}(\R^\omega)$ implies, in particular, that
\begin{equation} \label{tdli_cscbound2}
|D_{hk}^2 G^i| \lesssim \beta^{i-h} \wedge \beta^{i-k} \wedge \sqrt{\beta^{i-h}\beta^{i-k}}\,,
\end{equation}
which is a weaker form of \eqref{tdli_cscbound}, seemingly more manageable in a non-LQ context. The notion of solution to \eqref{tdli_ns} is specified in \Cref{NST_csoldef}.

The strategy of the proof consists in obtaining such a solution as the limit of solutions to finite-dimensional Nash systems, rather than working directly in an infinite-dimensional setting. 
This ``bottom-up'' approach is reminiscent of that pursued in \cite{CR_ENS}, where a weak formulation of the Master Equation is deduced from a detailed analysis of $N$-dimensional Nash systems in a generalised MFG framework. Herein just as therein, the key step consists in proving certain a priori estimates for the $N$-dimensional system that are stable with respect to $N$, and in this work we will need more precise index-wise a priori estimates in order to have a sufficiently fine control on all partial derivatives of the solutions (because we wish to exploit bounds like that in \eqref{tdli_cscbound2}, provided by the c-self controlled sequence).

The pivotal result we prove to this end is stated as \Cref{tdli_proppret}, which we believe to be of interest in itself as well; it provides, for solutions to linear drift-diffusion equations on an arbitrary -- finite -- time horizon, a priori estimates that are stable with respect to the dimension, provided that the data -- drift included -- fulfil bounds like \eqref{tdli_cscbound2}.

Finally, we point out that the above theorem only providing existence for sufficiently small time horizons (while under our other assumptions uniqueness always holds; see \Cref{NST_thmex}) is to be considered a natural consequence of our assumptions only concerning the regularity and the decay of the data. Indeed, in MFG theory it is well-known that having long-time well-posedness of the Master Equation (which -- we remind -- is the limit object corresponding to our infinite-dimensional Nash system in the symmetric and dense setting) requires additional structural assumptions of \emph{monotonicity}, the most influential ones being the Lasry--Lions monotonicity and the displacement monotonicity; the reader can have a look at \cite{Ahuja16, BM24, CDLL, CiPo, gangbo, GraMe, MouZhang}, and references therein.

Outside the MF context, it is not clear how an analogous assumption should be formulated. A sufficient condition in order to deal with long horizons in our sparse setting is presented in \cite{CR24}, yet it is so tailored to the LQ structure that adapting it to other frameworks seems unviable. Summing up, proving long-time well-posedness for the problem we are considering should require the identification of some correct notion of ``monotonicity'' in this sparse setting, but this is beyond what seems to be achievable at the moment.

\smallskip

We conclude these introductory pages by stressing that, despite the strong relationship between the infinite-dimensional Nash system and network games, this work deals with system~\eqref{tdli_ns} from a purely PDE viewpoint. To the best of our knowledge, our approach is novel among those adopted so far to study infinite-dimensional -- or \emph{abstract} -- HJB equations (we refer, e.g., to \cite{BLL_HJBID,BD_HJBID,HJBHHS,DaPrato_Zabczyk_2002,Masiero16,Masiero14}, and references therein), for multiple reasons. First, as already noted, we mainly work on finite-dimensional problems, deducing estimates that are stable with respect to the dimension. Second, our method does not make use of any tool related to spectral or Hilbert spaces theories, rather it relies on particular decay properties of the data $G$ and $H$. Third, our ambient space $\R^\omega$ is not even Banach, though we note that our strategy could be set in $\ell^\infty(\N)$, which is the most natural Banach space for our Nash system; in that case, considering $A$ to be $\ell^\infty(\N^2)$-valued, we should expect to work with solutions such that $D^2 u^i$ -- in the Fréchet sense -- belongs to $\ell^1(\N^2)$, which is indeed consistent with our decay assumptions.

\subsection*{Acknowledgements}

The author acknowledges the support of the Italian (EU Next Gen) PRIN project 2022W58BJ5 (\emph{PDEs and optimal control methods in mean field games, population dynamics and multi-agent models}, CUP E53D23005910006), of Gruppo Nazionale per l’Analisi Matematica, la Probabilità e le loro Applicazioni (GNAMPA) of the Istituto Nazionale di Alta Matematica (INdAM), and of Fondazione Cassa di Risparmio di Padova e Rovigo.

\section{Weighted H\"{o}lder spaces of functions on \texorpdfstring{$\R^\omega$}{Rw}} \label{NST_secholsp}

We start with some useful definitions and related comments. At first sight, the reader might find the spaces defined in this section a bit odd, nevertheless we point out that they will turn out to be appropriate for dealing with decays governed by a c-self controlled sequence.

\begin{defn}
For $V \colon \R^\omega \to \R$ and $\gamma \in [0,1]$ we define
\[
[V]_{\gamma} \defeq \sup_{j \in \N} \sup_{x^{-j} \in \R^\omega} \sup_{y^j \neq z^j} \frac{\bigl| V(x)|_{x^j=y^j} - V(x)|_{x^j=z^j}\bigr|}{|y^j-z^j|^\gamma}\,.
\]
\end{defn}

\begin{rmk}
We included also $\gamma = 0$, for which $[V]_0 \leq 2\norm{V}_\infty$, where $\norm\var_\infty$ is the usual sup-norm.
\end{rmk}

\begin{defn} \label{NST_defnormb}
Given $\beta \in (\R_+)^\N$ and $\alpha \in \N^\N$ with $|\alpha| = \sum_{i \in \N} \alpha^i < \infty$, for $V \colon \R^\omega \to \R$  and $\gamma \in [0,1)$ we define
\[
\norm{V}_{\infty;\beta,\alpha} \defeq \frac{\norm{V}_\infty}{\beta^\alpha} \quad\text{and}\quad [V]_{\gamma;\beta,\alpha} \defeq \frac{[V]_\gamma}{\beta^\alpha}\,,
\]
where $\beta^\alpha$ is recursively defined as follows: if $|\alpha|=0$, $\beta^\alpha \defeq 1$, and for $|\alpha| \geq 1$,
\[
\beta^\alpha \defeq \biggl(\, \prod_{i \in \N} (\beta^i)^{\alpha^i} \biggr)^{\frac1{|\alpha|}} \wedge \min \beta^{\alpha'},
\]
the minimum being taken over all $\alpha'\leq\alpha$ (according to the lexicographic order) with $|\alpha'| = |\alpha|-1$.
\end{defn}

\begin{ex} \label{NST_exa2}
We will essentially use the above definition of $\beta^\alpha$ for $|\alpha| \leq 2$. For example, note that if $\alpha^j = 1 = \alpha^k$ and $\alpha^i = 0$ for all $i \notin \{j,k\}$, then $\beta^\alpha = \beta^j \wedge \beta^k \wedge \sqrt{\beta^j\beta^k}$.
\end{ex}

\begin{rmk} \label{NST_rmkomitb0}
One has that $\norm{V}_{\infty;\beta,0} = \norm{V}_\infty$ and $[V]_{\gamma;\beta,0} = [V]_\gamma$ are in fact independent of $\beta$. Similarly, with $1$ denoting the all-ones sequence, $\norm{V}_{\infty;1,\alpha} = \norm{V}_\infty$ and $[V]_{\gamma;1,\alpha} = [V]_\gamma$.
\end{rmk}

\begin{rmk} \label{NST_rmkbinc}
Since $\beta^\alpha$ is non-increasing with respect to $\alpha$, one has $\norm{V}_{\infty;\beta,\alpha} \leq \norm{V}_{\infty;\beta,\alpha'}$ if $\alpha \leq \alpha'$.
\end{rmk}

\begin{rmk} \label{NST_rmkHcD}
Let $\alpha \in \N^\N$ with $|\alpha| < \infty$, and suppose that $\norm{D^{\alpha'} V}_{\infty;\beta,\alpha'}$ is finite for all $\alpha'\geq\alpha$ with $|\alpha'| = |\alpha|+1$ (we are using the standard multi-index notation for derivatives; that is, $D^\alpha V = D^{\alpha^0}_0D^{\alpha^1}_1\cdots D^{\alpha^\ell}_\ell V$, with $\ell \defeq \max\{i \in \N :\ \alpha^i \neq 0\}$). Then 
\[ \begin{split}
[D^\alpha V]_{\gamma;\beta,\alpha} &\leq \frac{\sup_j \norm{D_j D^\alpha V}_\infty \vee 2\norm{D^\alpha V}_\infty}{\beta^\alpha} \\
&\leq \sup_{\substack{\alpha'\geq\alpha \\ |\alpha'| = |\alpha|+1}}\!\! \norm{D^{\alpha'}V}_{\infty;\beta,\alpha}\, \vee\, 2\norm{D^\alpha V}_{\infty;\beta,\alpha}\,.
\end{split}
\]
\end{rmk}

\begin{defn} \label{NST_defHS}
Given $m \in \N$, $\gamma \in [0,1)$ and $\beta \in (\R_+)^\N$, the space $C^{m+\gamma}_\beta(\R^\omega)$ is that of all functions $V \colon \R^\omega \to \R$ such that, for all $\alpha \in \N^\N$ with $|\alpha| \leq m$, the derivative $D^\alpha V$ exists and it is continuous, with finite norm
\[
\norm{V}_{m+\gamma;\beta} \defeq \sum_{k \leq m} \sup_{|\alpha| = k} \norm{D^\alpha V}_{\infty;\beta,\alpha} + \sup_{|\alpha| = m} [D^\alpha V]_{\gamma;\beta,\alpha}.
\]
For $\gamma = 1$, we denote by $C^{m,1}_\beta(\R^\omega)$ the space of all functions as above, with finite norm
\[
\norm{V}_{m,1;\beta} \defeq \sum_{k \leq m} \sup_{|\alpha| = k} \norm{D^\alpha V}_{\infty;\beta,\alpha} + \sup_{|\alpha| = m} [D^\alpha V]_{1;\beta,\alpha}.
\]
\end{defn}

We will also benefit of a slight variant of the above spaces, which is given as follows.

\begin{defn} \label{NST_defHS'}
Given $m \in \N \setminus \{0\}$, $\gamma \in [0,1)$ and $\beta \in (\R_+)^\N$, the space $C^{m+\gamma-}_\beta(\R^\omega)$ is that of all functions $V \in C_\beta^{m-1+\gamma}(\R^\omega)$ such that, for all $\alpha \in \N^\N$ with $|\alpha| = m$, the derivative $D^\alpha V$ exists and it is continuous, with finite norm
\[
\norm{V}_{m+\gamma-;\beta} \defeq \norm{V}_{m-1+\gamma;\beta} + \sup_{|\alpha|=m} \sup_{\substack{\alpha'\leq\alpha \\ |\alpha'| = m-1}} \bigl( \norm{D^\alpha V}_{\infty;\beta,\alpha'} + [D^\alpha V]_{\gamma;\beta,\alpha'} \bigr).
\]
One also defines $C^{m-,1}_\beta(\R^\omega)$ in an analogous manner.
\end{defn}

\begin{rmk} \label{NST_rmksubs}
By \Cref{NST_rmkbinc} (and the fundamental theorem of calculus) one has that $C^{m+\gamma}_\beta(\R^\omega)$ is a closed subspace of $C^{m+\gamma-}_\beta(\R^\omega)$, and by \Cref{NST_rmkHcD}
\[
\sum_{k \leq m-1} \sup_{|\alpha|=k} \norm{D^\alpha V}_{\infty;\beta,\alpha} + \sup_{|\alpha|=m} \sup_{\substack{\alpha'\leq\alpha \\ |\alpha'| = m-1}} \bigl( \norm{D^\alpha V}_{\infty;\beta,\alpha'} + [D^\alpha V]_{\gamma;\beta,\alpha'} \bigr)
\]
is an equivalent norm on $C^{m+\gamma-}_\beta(\R^\omega)$.
\end{rmk}

In the following, in light of \Cref{NST_rmkomitb0}, for $\gamma \in [0,1)$ we will simply write $C^\gamma(\R^\omega)$ in lieu of $C^\gamma_\beta(\R^\omega)$ and omit the subscripts $\beta$ and $\alpha$ in the seminorms; similarly $C^{0,1}(\R^\omega) \defeq C^{0,1}_\beta(\R^\omega)$. We will also omit the $\beta$ when it is the all-ones sequence; that is, $C^{m+\gamma}(\R^\omega) \defeq C^{m+\gamma}_{{1}}(\R^\omega)$. The same conventions are understood for the spaces given in \Cref{NST_defHS'} as well.


\begin{lem} \label{NST_CmgB}
The spaces $C^{m+\gamma-}_\beta(\R^\omega)$ and $C^{m+\gamma}_\beta(\R^\omega)$ are Banach.
\end{lem}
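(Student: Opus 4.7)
The plan is to adapt the classical proof that $C^{m+\gamma}(\R^N)$ is Banach, paying attention to two features of our setting: the weighted norm in \Cref{NST_defHS} takes a supremum over the infinite collection $\{\alpha \in \N^\N : |\alpha|\leq m\}$, and the ambient space $\R^\omega$ is not locally compact, so convergence of derivatives must be purely uniform on $\R^\omega$.

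Given a Cauchy sequence $(V_n)_n$, the first step is to fix any $\alpha$ with $|\alpha|\leq m$: since $\beta^\alpha>0$ is a constant, Cauchyness in the weighted norm forces $(D^\alpha V_n)_n$ to be uniformly Cauchy on $\R^\omega$, hence uniformly convergent to some $W_\alpha \in C^0(\R^\omega)$; I set $V\defeq W_0$. A straightforward induction on $|\alpha|\leq m-1$, based on the one-dimensional identity
\[
D^\alpha V_n(x+se_j) - D^\alpha V_n(x) = \int_0^s D_j D^\alpha V_n(x+\tau e_j)\,\di\tau
\]
and its passage to the limit $n\to\infty$, then shows that $V$ admits all partial derivatives $D^\alpha V = W_\alpha$ of order at most $m$; the higher-order Gateaux derivatives along arbitrary vectors of $\R^\omega$, as defined in the introduction, are recovered from these coordinate ones by multilinearity.

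It remains to check that $V_n\to V$ in $\norm{\,\cdot\,}_{m+\gamma;\beta}$, which automatically gives $V\in C^{m+\gamma}_\beta(\R^\omega)$ by boundedness of the Cauchy sequence. For each fixed $\alpha$, both $W\mapsto\norm{W}_\infty/\beta^\alpha$ and $W\mapsto[W]_\gamma/\beta^\alpha$ are lower semicontinuous under the pointwise convergence of $D^\alpha V_n - D^\alpha V_k$ as $k\to\infty$; so, given $\epsilon>0$ and $n,k$ beyond the Cauchy threshold with $\norm{V_n-V_k}_{m+\gamma;\beta}<\epsilon$, I would freeze $\alpha$, pass to $\liminf_k$, and only then take the supremum over $\alpha$, obtaining $\norm{V_n-V}_{m+\gamma;\beta}\leq\epsilon$. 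The variants $C^{m,1}_\beta$ and $C^{m+\gamma-}_\beta$, $C^{m-,1}_\beta$ follow by the very same argument, simply replacing the top-order weight $\beta^\alpha$ by $\beta^{\alpha'}$ as prescribed by \Cref{NST_defHS'}.

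The only point demanding care -- and the step I would plan most attentively -- is this ordering of limits in the last paragraph: one must freeze $\alpha$ before sending $k\to\infty$, and take the $\sup_\alpha$ only at the end. Swapping the order would let the infinitude of multi-indices potentially spoil the estimate, whereas the freeze-first argument sidesteps the issue with no extra effort.
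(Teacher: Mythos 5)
Your proof is correct and in spirit is the same as the paper's: identify the uniform limit $W_\alpha$ of each $D^\alpha V_n$, use the fundamental theorem of calculus to show $W_\alpha=D^\alpha W_0$, and pass to the limit in the norm by freezing the multi-index before taking $\liminf_k$ and only then $\sup_\alpha$. The packaging is noticeably more direct, though: the paper first applies Ascoli--Arzel\`a to extract a subsequence converging in the compact-open topology (for the base case $C^\gamma$), and then for $m\geq1$ routes through an auxiliary Banach sequence space $\call Y$ together with an embedding $\imath$, whereas you bypass both by observing that for each fixed $\alpha$ the weight $\beta^\alpha$ (or $\beta^{\alpha'}$ for the $C^{m+\gamma-}_\beta$ variant) is a positive constant, so $(D^\alpha V_n)_n$ is already uniformly Cauchy in the plain sup-norm. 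This is a genuine simplification, and the ordering-of-limits point you single out is precisely what the paper's $\call Y$-construction is engineered to encode. One small caveat: your parenthetical remark that Gateaux derivatives along arbitrary vectors ``are recovered from the coordinate ones by multilinearity'' is not true in general on $\R^\omega$, but it is also irrelevant here, since $C^{m+\gamma}_\beta(\R^\omega)$ is defined solely in terms of iterated coordinate derivatives $D^\alpha$ with $\alpha\in\N^\N$, $|\alpha|\leq m$, so nothing about directional derivatives along general $v\in\R^\omega$ needs to be established; you may simply drop that sentence.
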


\begin{proof}
We prove that $C^{\gamma}(\R^\omega)$ is Banach, and then that so is $C^{m+\gamma-}_\beta(\R^\omega)$ for $m\geq1$. This will give also the completeness of $C^{m+\gamma}_\beta(\R^\omega)$ due to \Cref{NST_rmksubs}.

If $(V^N)_{N\in \N} \subset C^{\gamma}(\R^\omega)$ is Cauchy, by the Ascoli--Arzelà theorem there exists a subsequence $V^{N_n} \to V \in C^0(\R^\omega)$ as $n \to \infty$ in the topology of compact convergence.\footnote{That is, $V^{N_n} \to V$ uniformly on each compact subset of $\R^\omega$; cf., e.g., \cite[Theorem~7.18]{keltop}.} We have, for any $x,y \in \R^\omega$ with $x^i = y^i$ for all $i \neq j$,
\[ \begin{split}
&\bigl|V(x) - V^N(x) - \bigl(V(y) - V^N(y)\bigr)\bigr| \\
&\hspace{50pt} \leq \limsup_{n\to\infty}\, \bigl|V^{N_n}(x) - V^{N_n}(y) - \bigl(V^N(x) - V^N(y)\bigr)\bigr| \\
&\hspace{50pt} \leq |x^j-y^j|^\gamma \limsup_{n\to\infty}\, [V^{N_n} - V^N]_{\gamma}\,,
\end{split}
\]
and $\norm{V - V^N}_{\infty} \leq \limsup_{n\to\infty} \norm{V^{N_n} - V^N}_{\infty}$. Therefore $V^N \to V$ in $C^\gamma(\R^\omega)$.

Fix now a bijection $\hat\alpha \colon \N \to \{ \alpha \in \N^\N :\ |\alpha| \leq m\}$ and let $\call Y$ be the space of sequences of functions $W \in \ell^\infty(\N;C^\gamma(\R^\omega))$ with finite norm
\[
\norm{W}_{\call Y} \defeq \sum_{k \leq m} \,\sup_{i \in N :\, |\hat\alpha(i)| = k} \frac{\norm{W^i}_\infty + [W^i]_\gamma}{w(\hat\alpha(i))}\,,
\]
where
\[
w(\hat\alpha(i)) \defeq \begin{dcases}
\beta^{\hat\alpha(i)} & \text{if} \ |\hat\alpha(i)| < m \\
\min_{\substack{\alpha \leq \hat\alpha(i) \\ |\alpha'| = m-1}} \beta^{\alpha} & \text{if} \ |\hat\alpha(i)| = m\,;
\end{dcases}
\]
$\call Y$ is easily seen to be a Banach space. Then consider the linear map $\imath \colon C^{m+\gamma}_\beta(\R^\omega) \to \call Y$ given by $\imath(V)^i \defeq D^{\hat\alpha(i)} V$, for $V \in C^{m+\gamma}_\beta(\R^\omega)$ and $i \in \N$
By \Cref{NST_rmkHcD}, $\norm{V}_{m+\gamma-;\beta,\alpha} \leq \norm{\imath(V)}_{\call Y} \leq 3 \norm{V}_{m+\gamma-;\beta}$.
Therefore, if $(V^N)_{N\in\N} \subset C^{m+\gamma-}_\beta(\R^\omega)$ is Cauchy, then $\imath(V^N) \to W$ in $\call Y$. By compact convergence, if $|\alpha(j)| = |\alpha(i)| + 1$ with $\alpha(j)^k = \alpha(i)^k+1$, then $W^i (x) - W^i(x)|_{x^k = 0}
= \int_0^{x^k} W^j(x)|_{x^k = y} \,\di y$.
Exploiting the fundamental theorem of calculus, one can prove by induction that this implies that $W^i = D^{\alpha(i)} W^{\alpha^{-1}(0)}$; therefore, $W = \imath(W^{\alpha^{-1}(0)})$. By the above equivalence of norms and the linearity of $\imath$, we conclude that $V^N \to W^{\alpha^{-1}(0)}$ in $C^{m+\gamma-}_\beta(\R^\omega)$.
\end{proof}

In a similar fashion, we define parabolic (H\"{o}lder) spaces on $\R^\omega_T$.

\begin{defn}
For $V \colon \R^\omega_T \to \R$ and $\gamma \in [0,1)$ we define
\[ \begin{split}
[V]_{\frac\gamma2,\gamma} &\defeq \sup_{x \in \R^\omega} [V(\var,x)]_{\frac\gamma2} + \sup_{t \in [0,T]} [V(t,\var)]_\gamma \\
&= \sup_{s \neq t} \frac{\norm{V(t,\var) - V(s,\var)}_\infty}{|t-s|^{\frac\gamma2}} + \sup_{t \in [0,T]} [V(t,\var)]_\gamma\,.
\end{split}
\]
Then, given $\beta \in (\R_+)^\N$ and $\alpha \in \N^\N$ with $|\alpha| < \infty$, for $V \colon \R^\omega_T \to \R$ we define
\[
[V]_{\frac\gamma2,\gamma;\beta,\alpha} \defeq \sup_{x \in \R^\omega} \frac{[V(\var,x)]_{\frac\gamma2}}{(\beta^\alpha)^{\frac12}} + \sup_{t \in [0,T]} \frac{[V(t,\var)]_\gamma}{\beta^\alpha}\,,
\]
where $\beta^\alpha$ is defined as in \Cref{NST_defnormb}.
\end{defn}

\begin{defn}
Given $\gamma \in [0,1)$, the space $C^{\frac\gamma2,\gamma}(\R^\omega_T)$ is that of all continuous functions $V \colon \R^\omega_T \to \R$ with finite norm
\[
\norm{V}_{\frac\gamma2,\gamma} \defeq \norm{V}_\infty + [V]_{\frac\gamma2,\gamma}\,.
\]
Given also $\beta \in (\R_+)^\N$, the space $C^{1,2}_\beta(\R^\omega_T)$ is that of all functions $V \colon \R^\omega_T \to \R$ such that the derivatives $\de_t V$ and $D^\alpha V$, for all $\alpha \in \N^\N$ with $|\alpha| \leq 2$ exists and are continuous on $(0,T) \times \R^\omega$, with finite norm
\[
\norm{V}_{1,2;\beta} \defeq \norm{\de_t V}_\infty + \sum_{k \leq 2} \sup_{|\alpha| = k} \norm{D^\alpha V}_{\infty;\beta,\alpha}\,. 
\]
\end{defn}

With a proof very much similar to that of \Cref{NST_CmgB} (which we thus omit), we can also show the following fact.

\begin{lem} 
The spaces $C^{\frac\gamma2,\gamma}(\R^\omega_T)$ and $C^{1,2}_\beta(\R^\omega_T)$ are Banach.
\end{lem}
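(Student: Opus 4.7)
The plan is to transpose the two-part argument of \Cref{NST_CmgB} to the parabolic setting, without introducing any substantially new idea: the defining norms are honest sup-norms (plus one-variable Hölder quotients in the first case), so the same compact-convergence plus fundamental-theorem-of-calculus strategy should work verbatim, with only one additional ingredient for the time derivative.

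For $C^{\frac\gamma2,\gamma}(\R^\omega_T)$, I would start by picking a Cauchy sequence $(V^N)_N$ and observing that it is uniformly bounded and equicontinuous on every compact subset of $\R^\omega_T$ (the one-coordinate spatial Hölder bound controls oscillations in each single spatial variable, and the time Hölder bound controls oscillations in time). Ascoli-Arzelà then yields a subsequence $V^{N_n} \to V$ in the topology of compact convergence, with $V \in C^0(\R^\omega_T)$. As in the first half of \Cref{NST_CmgB}, for $x,y \in \R^\omega$ differing only in their $j$-th coordinate and for $s,t \in [0,T]$, passing to the $\limsup$ as $n \to \infty$ in the spatial and temporal Hölder quotients of $V^{N_n} - V^N$ gives $\norm{V - V^N}_{\frac\gamma2,\gamma} \leq \limsup_n \norm{V^{N_n} - V^N}_{\frac\gamma2,\gamma}$, so the whole sequence converges to $V \in C^{\frac\gamma2,\gamma}(\R^\omega_T)$.

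For $C^{1,2}_\beta(\R^\omega_T)$, I would mimic the enumeration device of \Cref{NST_CmgB}: fix a bijection $\hat\alpha \colon \N \to \{\alpha \in \N^\N : |\alpha| \leq 2\}$, and introduce the Banach space $\call Y$ of pairs $(W_\tau, (W^i)_{i\in\N})$ of bounded continuous functions on $\R^\omega_T$, endowed with the weighted sup norm
\[
\norm{(W_\tau, (W^i)_i)}_{\call Y} \defeq \norm{W_\tau}_\infty + \sum_{k \leq 2}\, \sup_{i \in \N :\, |\hat\alpha(i)| = k} \frac{\norm{W^i}_\infty}{\beta^{\hat\alpha(i)}}\,.
\]
The map $\imath(V) \defeq \bigl(\de_t V,\,(D^{\hat\alpha(i)} V)_{i\in\N}\bigr)$ is then an isometry from $C^{1,2}_\beta(\R^\omega_T)$ into $\call Y$, so a Cauchy sequence $(V^N)_N$ yields a $\call Y$-limit $(W_\tau, (W^i)_i)$; in particular the $V^N$ converge in compact topology to $U \defeq W^{\hat\alpha^{-1}(0)}$.

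The main obstacle, which is already the crux of \Cref{NST_CmgB}, will be the identification step: one must show that each $W^i$ truly equals $D^{\hat\alpha(i)} U$ and that $W_\tau = \de_t U$. For the spatial derivatives I would repeat the induction on $|\hat\alpha(i)|$ based on the fundamental theorem of calculus, exactly as spelled out at the end of the proof of \Cref{NST_CmgB}; for the time derivative, the analogous identity $V^N(t,x) - V^N(0,x) = \int_0^t \de_t V^N(s,x)\,\di s$ passes to the limit in compact topology and gives $W_\tau = \de_t U$. Continuity of all these derivatives is preserved by compact convergence, so $U$ belongs to $C^{1,2}_\beta(\R^\omega_T)$ and $V^N \to U$ in this space, proving completeness.
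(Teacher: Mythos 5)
Your proposal is correct and takes essentially the same approach the paper has in mind: the paper explicitly omits this proof, stating only that it is ``very much similar to that of \Cref{NST_CmgB},'' and your write-up is precisely that transposition — Ascoli--Arzelà plus passage to the $\limsup$ in the Hölder quotients for $C^{\frac\gamma2,\gamma}(\R^\omega_T)$, and the enumeration/embedding into a weighted sup-norm space $\call Y$ followed by FTC-based identification of derivatives for $C^{1,2}_\beta(\R^\omega_T)$, with the single new ingredient being the FTC in time $V^N(t,x)-V^N(0,x)=\int_0^t \de_t V^N(s,x)\,\di s$ to recover $\de_t U = W_\tau$.
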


We can now set the notion of classical solution to the infinite-dimensional Nash system. The space $C^{0}(\R^\omega_T)$ appearing below is defined in the usual way as the space of all continuous functions from $\R^\omega_T$ to $\R$ with bounded norm $\norm{\var}_\infty$; alternatively, it can be understood as the space $C^{0,0}(\R^\omega_T)$ defined above, as the two respective norms are equivalent.

\begin{defn} \label{NST_csoldef}
We will say that $u = (u^i)_{i\in\N}$ is a classical solution to the Nash system \eqref{tdli_ns} if the following happens:
\begin{itemize}[leftmargin=2em]
\item for each $i \in \N$, $u^i \in C^{1,2}_{\beta_i}(\R^\omega_T)$ for some $\beta_i \in (\R_+)^\N$, and $\sup_{i\in\N} \norm{u^i}_{1,2;\beta_i} < \infty$;
\item the series appearing in the equations converge in $C^0(\R^\omega_T)$ uniformly in $i$;
\item the equations are satisfied pointwise in $\R^\omega_T$.
\end{itemize}
\end{defn}


\section{A priori estimates on linear parabolic equations} \label{NST_secape}

We begin with an important general estimate on the derivatives of a solution to a Fokker--Planck--Kolmogorov equation. We state this result under slightly more general hypotheses than the ones under which we will use it, for the convenience of possible further applications. 

\begin{lem} \label{tdli_lem:kfp}
Let $N \in \N$. Let $A \colon \R \times \R^N \to \Sym(N)$ and $B \colon \R \times \R^N \to \R^N$ be bounded and continuous, with $D_i A^{hk}$, $D^2_{ij}A^{hk}$ and $D_iB^j$ bounded for all $i,j,h,k \in \DSet N$.
Suppose that $A \geq \lambda I$ for some $\lambda>0$ and $D_k A^{ij} = 0$ if $i \neq k \neq j$.
Let $\rho = \rho_\epsilon \in C^{1,2}((s,T)\times\R^N) \cap C^0([s,T) \times \R^N)$ solve
\begin{equation} \label{tdli_kfp} \begin{dcases}
\de_t \rho - \sum_{j,k \in \DSet N} D^2_{ij}(A^{ij}\rho) - \mathrm{div}(B\rho) = 0 & \text{in}\ (s,T) \times \R^N \\
\rho(s) = \delta_y \star \eta_\epsilon,
\end{dcases}\end{equation}
for $T>0$ and some $(s,y) \in [0,T) \times \R^N$, with $(\eta_\epsilon)_{\epsilon > 0}$ being an approximation of the identity as $\epsilon \to 0$.
Then there exists $T^*>0$, depending only on
\begin{gather*}
\lambda, \quad \sup_i \,\biggl\lVert \,\sum_j |D_iB^j|^2 \biggr\rVert_\infty\,, \\
\sup_i \,\biggl\lVert\, \sum_j D_{i} A^{ij} \biggr\rVert_\infty\,, \quad \sup_i\, \biggl\lVert\, \sum_j D^2_{ij} A^{ij} \biggr\rVert_\infty\,, \quad \text{and} \quad \sup_i\, \biggl\lVert \, \sum_j \Bigl\lvert  \sum_k D^2_{ik} A^{jk} \Bigr\rvert^2 \biggr\rVert_\infty\,,
\end{gather*}
and $C>0$, depending only on the above quantities and $T^*$,
such that, if $T < T^*$, then
\begin{equation} \label{tdli_estdkr}
\lim_{\epsilon \to 0} \sup_k \int_s^t \! \int_{\R^N} |D_k\rho_\epsilon| \leq C \sqrt{t-s} \qquad \forall\, t \in [s,T].
\end{equation}
\end{lem}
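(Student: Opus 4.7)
The plan is to reduce, via duality, the $L^1(\di x)$ bound on $v_k := D_k\rho_\epsilon$ to a gradient estimate on the backward Kolmogorov adjoint of \eqref{tdli_kfp}, and then prove the latter uniformly in $N$ by a Bernstein-type computation. For $\phi\in L^\infty(\R^N)$ with $\norm{\phi}_\infty\leq 1$ and $u\in[s,T]$, let $V^\phi_u(r,z)$ solve
\[
-\de_r V - \sum_{ij}A^{ij}D^2_{ij}V + B\cdot DV = 0 \quad\text{on } [s,u]\times\R^N, \qquad V(u,\var)=\phi,
\]
so that $V^\phi_u(r,z)=\bb E[\phi(X^{r,z}_u)]$ for the diffusion generated by $\call L := \sum A^{ij}D^2_{ij} - B\cdot D$. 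Differentiating \eqref{tdli_kfp} in $x_k$, pairing the resulting equation for $v_k$ with $V^\phi_u$, integrating by parts in space and time, and letting $\epsilon\to 0$, one gets an identity of the form
\[
\lim_{\epsilon\to 0}\int(D_k\rho_\epsilon)(u,x)\phi(x)\,\di x = -D_{y^k}V^\phi_u(s,y) + \int_s^u\!\!\int \rho(r,x)\,\call R_k V^\phi_u(r,x)\,\di x\,\di r,
\]
where $\call R_k V$ is essentially the commutator $[D_k,\call L^\ast]V$, linear in $DV$ and $D^2V$ with coefficients built from $D_k A$ and $D_k B$---and \emph{exactly} the coefficients whose sizes are encoded by the structural sums appearing in the statement.

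The heart of the proof is the uniform-in-$N$ gradient (and companion Hessian) bound
\[
|D_{y^k}V^\phi_u(r,z)|\leq C(u-r)^{-1/2}\norm{\phi}_\infty, \qquad |D^2_{ij}V^\phi_u(r,z)|\leq C(u-r)^{-1}\norm{\phi}_\infty,
\]
valid for $u-r\leq T^\ast$, with $C$ depending only on the listed constants. I would establish these by a Bernstein argument on $\Psi(r,z) := (u-r)|D_kV|^2 + c\,|V|^2$: the backward-parabolic residual $-\de_r\Psi - \call L\Psi$ splits into a good ellipticity term $-2(u-r)\lambda|DD_kV|^2$ and the cross terms coming from the commutator, which Cauchy--Schwarz absorbs at the cost of precisely the sums in the hypotheses---the pairing with $(D_kA^{kk})D^2_{kk}V$ (and its non-diagonal siblings) is charged to items~2 and~4, while the pairing with $\sum_j(D_kB^j)D_jV$ is charged to the column-$\ell^2$ bound of item~1; item~3 enters through the zero-th-order drift coefficient produced by expanding $\call L^\ast$, and a secondary Bernstein step yields the Hessian estimate. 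For $T^\ast$ small enough in these data the residual is nonpositive and the backward maximum principle gives $\Psi(s,y)\leq c\norm{\phi}_\infty^2$, whence both bounds follow.

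With the gradient and Hessian estimates in hand, each piece of the identity above is $O((u-s)^{-1/2})\norm{\phi}_\infty$: the leading term by the gradient estimate, and the remainder because $\norm{\rho(r,\var)}_{L^1}\equiv 1$ and, under the SDE interpretation, $\int\rho(r,\var)\call R_kV^\phi_u(r,\var) = \bb E[\call R_k V^\phi_u(r,X^{s,y}_r)]$, which averages the Hessian singularity over the diffusion law. Dualising over $\phi$ with an approximation of $\mathrm{sgn}(D_k\rho_\epsilon(u,\var))$ and passing $\epsilon\to 0$ yields $\sup_k\lim_{\epsilon\to0}\int|D_k\rho_\epsilon(u,\var)|\,\di x\leq C(u-s)^{-1/2}$, and integration in $u\in(s,t)$ produces \eqref{tdli_estdkr}. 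I expect the principal obstacle to be the dimension-free Bernstein closure: a naive computation involving the full $|DV|^2 = \sum_k|D_kV|^2$ would produce Frobenius-type norms of $DA,DB$ that scale with $N$, so one must work componentwise (fixing $k$) so that the commutator only sees the $k$-th column of $DB$ and the $k$-th slice of $DA$, which are the combinations the structural hypotheses are tailored to bound dimension-free; the shortness of $T^\ast$ is what lets these terms be absorbed perturbatively around the explicit (dimension-free, because tensor-product) frozen-coefficient heat kernel.
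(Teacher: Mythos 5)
The duality skeleton of your argument (pair $D_k\rho_\epsilon$ against solutions of the backward Kolmogorov equation with terminal data approximating $\mathrm{sgn}\,D_k\rho_\epsilon(\tau)$) is exactly that of the paper, but your plan for the key gradient/Hessian bounds on $V$ relies on a pointwise Bernstein argument closed by the backward maximum principle, and this will \emph{not} close under the hypotheses of the lemma. Here is where it breaks.

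In the Bernstein residual for $\Psi=(u-r)|D_kV|^2+c|V|^2$, the dangerous commutator term is $2(u-r)D_kV\sum_{ij}(D_kA^{ij})D^2_{ij}V$. Under $D_kA^{ij}=0$ for $i\neq k\neq j$ this reduces (up to the diagonal) to $2(u-r)D_kV\sum_j(D_kA^{kj})D_j(D_kV)$. To absorb this pointwise into the good term $-2(u-r)\lambda|D(D_kV)|^2$ by Cauchy--Schwarz one is forced to pay with $\frac{(u-r)}{\lambda}|D_kV|^2\sum_j|D_kA^{kj}|^2$, i.e.\ an $\ell^2$ bound on the $k$-th \emph{slice of $D_kA$}. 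That quantity is not among the listed structural constants: item~2 is the $\ell^1$-type sum $\sum_jD_iA^{ij}$, and item~4 is an $\ell^2$ bound on contracted \emph{second} derivatives $\sum_kD^2_{ik}A^{jk}$, which is of no help here. The same obstruction reappears at the last step: even granting $|D^2_{ij}V|\lesssim(u-r)^{-1}$, the remainder $\int_s^u\int\rho\sum_{ij}(D_kA^{ij})D^2_{ij}V$ has a nonintegrable singularity at $r=u$, and ``averaging the Hessian singularity over the diffusion law'' is not, by itself, a mechanism for removing it.

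What the paper does instead is test the differential inequality satisfied by $v=\frac{\tau-t}{2}|D_kw|^2$ against $\rho_\epsilon$ (not apply the maximum principle), which lets it write $D^2_{jk}w\,D_kw=\frac12 D_j|D_kw|^2$ and integrate by parts in $x_j$: the derivative lands either on $A$, yielding exactly $\sum_jD^2_{jk}A^{jk}$ (item~3), or on $\rho_\epsilon$, yielding $\sup_j\int\int|D_j\rho_\epsilon|$ --- the very quantity one is trying to estimate. This is the source of the self-referential estimate \eqref{NST_lemrhoest1} and the subsequent Gronwall/continuity argument that shows the a priori window $[s,\hat\tau]$ on which the bound is available can be pushed up to $T$ once $T$ is small enough, uniformly in $N$ and $\epsilon$. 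Your proposal has no analogue of this bootstrap and, as argued above, cannot avoid it: without integrating against $\rho_\epsilon$ you have no way to trade the derivative on $w$ for one on $A$ or on $\rho_\epsilon$, and the combinations of derivatives of $A$ you need are simply not controlled.
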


\begin{proof}
By the results in \cite[Chapter~1]{fried}, we know that \eqref{tdli_estdkr} holds for some constant $C$ and any $T > 0$. We want to make explicit the dependence of $C$ on the data, for $T$ near $0$.
Let $\tau \in (s,T)$ and consider a sequence of smooth functions $\psi^{\tau,k}_{\epsilon,n} \to \mathrm{sgn}\,D_k \rho_{\epsilon}(\tau)$ in $L^1_\loc(\R^N)$ as $n \to \infty$, with $\norm{\psi^{\tau,k}_{\epsilon,n}}_\infty = 1$ for all $n$. Since $\int_{\R^N} D_k \rho_\epsilon(\tau)\psi^{\tau,k}_{\epsilon,n}(\tau) \to \int_{\R^N} |D_k \rho_\epsilon(\tau)|$ as $n \to \infty$, for any sequence $\epsilon_n \to 0$ we can consider $\ell_n \to \infty$ such that
\[
\int_{\R^N} \bigl| D_k \rho_n(\tau)\psi^{\tau,k}_{n}(\tau) - |D_k \rho_n(\tau)| \bigr| \to 0 \quad \text{as} \ n \to \infty,
\]
where $\rho_n \defeq \rho_{\epsilon_n}$ and $\psi^{\tau,k}_n \defeq \psi_{\epsilon_n,\ell_n}$.
Let now $w = w^{\tau,k}_{n}$ solving
\begin{equation} \label{tdli_bke}
-\de_t w - \tr(AD^2 w) + \pair{B}{Dw} = 0 \quad \text{on}\ (s,\tau) \times \R^N,
\end{equation}
with terminal condition $w|_{t=\tau} = \psi_n = \psi^{\tau,k}_{n}$. 
We first notice that testing the equation of $\rho_\epsilon$ by $\frac12 w^2$ over $(s,\tau)$ we get
\begin{equation} \label{tdli_estw2}
(w(s,\cdot)^2 \star \eta_{\epsilon})(y) + 2\int_s^{\tau} \int_{\R^N} \pair{ADw}{Dw} \rho_\epsilon \leq 1.
\end{equation}
On the other hand, $v(t,x) \defeq \frac{\tau-t}{2} |D_kw(t,x)|^2$ satisfies
\[ \begin{multlined}[.95\displaywidth]
-\de_t v - \tr(AD^2v) + \pair{B}{Dv} \\
\leq \frac12 |D_k w|^2 - (\tau-t)\pair{B_k}{Dw}D_k w + (\tau-t)\tr(D_k A D^2 w)D_k w,
\end{multlined}
\]
so testing by $\rho_{\epsilon}$ over $(s, \tau)$ one obtains
\begin{equation} \label{tdli_estDk1}
\begin{split}
&\int_{\R^N} v(s,\var) \eta_{\epsilon}(y - \var) \\
&\qquad\qquad \leq \frac12 \int_s^\tau \int_{\R^N} |D_k w|^2 \rho_\epsilon - \int_s^\tau \int_{\R^N} (\tau-t) \pair{D_kB}{Dw} D_k w \rho_\epsilon \\
&\quad\qquad\qquad + \int_s^\tau \int_{\R^N} (\tau-t)\tr(D_k A D^2 w)D_k w \rho_\epsilon\,.
\end{split}
\end{equation}
Using estimate \eqref{tdli_estw2} and the ellipticity assumption $A \geq \lambda I$, along with Young's and the Cauchy--Schwarz inequalities, we estimate
\[
\frac12 \int_s^\tau \int_{\R^N} |D_k w|^2 \rho_\epsilon - \int_s^\tau \int_{\R^N} (\tau-t) \pair{D_kB}{Dw} D_k w \rho_\epsilon \leq \frac{1 + \tau (1+C_B) }{4\lambda},
\]
with $C_B \defeq \norm{D_kB}_{2,\infty}^2$.
On the other hand, using our assumptions on $A$ and integrating by parts, 
\[ \begin{split}
\int_s^\tau \int_{\R^N} (\tau-t)\tr(D_k A D^2 w)D_k w \rho_\epsilon &= - \frac12 \int_s^\tau \int_{\R^N} (\tau-t) \sum_j D^2_{jk} A^{jk} |D_k w|^2 \rho_\epsilon \\
&\quad - \frac12 \int_s^\tau \int_{\R^N} (\tau-t) \sum_j D_k A^{jk} |D_k w|^2 D_j \rho_\epsilon\,,
\end{split}
\]
whence
\[ \begin{split}
&\int_s^\tau \int_{\R^N} (\tau-t)\tr(D_k A D^2 w)D_k w \rho_\epsilon \\
&\qquad\qquad \leq \frac{T C_A}{4\lambda} + C_A' \sup_{t \in [s,\tau]} \norm{v(t,\var)}_\infty \,\sup_j \int_s^\tau \!\int_{\R^N}|D_j \rho_\epsilon|\,,
\end{split}
\]
with $C_A \defeq \sup_k\, \bigl\lVert\, \sum_j D^2_{jk} A^{jk} \bigr\rVert_\infty$ and  $C_A' \defeq \sup_k \,\bigl\lVert\, \sum_j D_{k} A^{jk} \bigr\rVert_\infty$. 
Let now $y=z$ with $z = z(s,N,\tau,n)$ such that $|D_k w(s,z)|^2 \geq \norm{D_k w(s,\var)}_\infty^2 - 1$ for all $k \in \DSet N$. We will write $\rho_\epsilon=\rho^z_\epsilon$ to stress that $\rho_\epsilon^z(s)=\delta_z \star \eta_\epsilon$. Then for $\epsilon_n > 0$ so small that $\int_{\R^N} v(s,\var)\eta_{\epsilon_n}(z-\var) \geq v(s,z) - 1$, from the previous estimates we have
\begin{equation} \label{NST_lemrhoest1}
\begin{split}
\norm{v(s,\cdot)}_\infty 
&\leq 1 + \frac{T}2 + \frac{1 + T (1+ C_B + C_A)}{4\lambda} \\
&\quad + C_A' \sup_{t \in [s,\tau]} \norm{v(t,\var)}_\infty\, \sup_j \int_s^\tau \!\int_{\R^N}|D_j \rho^z_n|\,.
\end{split}
\end{equation}
By continuity, for any $\tau$ close enough to $s$ (say $s < \tau < \hat\tau = \hat\tau(s,n,N) \leq T$) one has
\[
\sup_j \int_s^{\tau} \!\int_{\R^N}|D_j \rho^z_n| \leq \frac{1}{2C_A'},
\]
so \eqref{NST_lemrhoest1} yields
\[
\norm{v(s,\cdot)}_\infty \leq 2 + T + \frac{1 + T (1+ C_B + C_A)}{2\lambda}.
\]
This implies that for all $\tau \leq \bar\tau \defeq \min_{t\in[s,T]} \hat\tau(t,n,N)$ one has 
\begin{equation} \label{NST_lemrhoest2}
\sup_k\, \norm{D_k w^{\tau,k}_n(t,\cdot)}_\infty \leq \frac{\bar C}{\sqrt{\tau-t}} \quad \forall\, t \in [s,\tau],
\end{equation}
with $\bar C = \bar C(\lambda,T,C_B,C_A)$ being the square root of the constant above. We want to show that we can choose $\bar\tau \equiv T$, provided that $T$ is small enough.
Testing the equation for of $w$ by $D_k\rho_n$ over $(s,\tau)$ we get 
\[ \begin{split}
- \int_{\R^N} D_k \rho_n(\tau) \psi_{n}(\tau) &= \int_{\R^N} D_k w(s,\cdot) \rho_n(s) +\int_s^\tau \! \int_{\R^N} \pair{D_k B}{Dw}\rho_n \\
&\quad + \sum_{ij} \int_s^\tau \!\int_{\R^N} D^2_{ik} A^{ij} D_{j} w \rho_n
+ \sum_{ij} \int_s^\tau \!\int_{\R^N} D_k A^{ij} D_j w D_i \rho_n,
\end{split}
\]
whence, by the Cauchy--Schwarz and Young's inequalities, and estimate~\eqref{tdli_estw2},
\begin{equation} \label{NST_lemrhoest3}
\begin{split}
\int_{\R^N} D_k \rho_n(\tau)\psi_{n}(\tau) &\leq \norm{D_k w(s,\cdot)}_\infty + T\Bigl( \frac12 C_B + C_A'' \Bigr) + \frac{1}{2\lambda} \\
&\quad + C_A'\, \sup_i \int_s^\tau \sup_j\, \norm{D_j w(t,\cdot)}_{\infty} \int_{\R^N} |D_i \rho_n|(t)\,\di t\,,
\end{split}
\end{equation}
where $C_A'' \defeq \sup_k\, \bigl\lVert \sum_i D^2_{ik} A^{i\var} \bigr\rVert_{2,\infty}^2$.
Let $n_0 = n_0(\tau,N) \in \N$ be such that
\[
\int_{\R^N} D_k \rho_n(\tau)\psi^{\tau,k}_{n}(\tau) \geq \int_{\R^N} |D_k \rho_n(\tau)| - 1
\]
for all $n \geq n_0$ and for all $k \in \DSet N$; for those $n$, by Gronwall's lemma,
\[ \begin{split}
&\sup_k \int_{\R^N} |D_k\rho_n|(\tau) \\
&\qquad \leq \bigl( 1 + \sup_k\, \norm{D_k w_{n}^{\tau,k}(s,\var)}_{\infty} + \hat C \bigr) \exp\biggl\{ C_A' \int_s^\tau \sup_j \,\norm{D_j w_{n}^{\tau,k}(t,\var)}_{\infty}\,\di t\biggr\}\,,
\end{split}
\]
where $\hat C \defeq T\bigl( \frac12 C_B + C_A'' \bigr) + \frac{1}{2\lambda}$. For $\tau \leq \bar\tau$ as above, we obtain
\begin{equation} \label{NST_lem1le}
\begin{split}
\sup_k \int_s^\tau \int_{\R^N} |D_k\rho_n| &\leq \sqrt{\tau-s}\,\bigl( \sqrt T(1+\hat C) + \bar C \bigr) e^{C_A'\bar C T} \\
&\leq \sqrt{T}\bigl(\sqrt{T}(1+\hat C) + \bar C\bigr) e^{C_A'\bar CT}\,,
\end{split}
\end{equation}
which holds for any $\rho_n = \rho_n^y$; in particular, if $\sqrt{T}\bigl(\sqrt{T}(1+\hat C) + \bar C\bigr)e^{C_A'\bar CT} \leq \frac1{2C_A'}$ and we let $y = z$ as above, by a continuity argument we see that we can choose $\hat\tau \equiv T$, and thus $\bar\tau = T$. We deduce that estimate~\eqref{NST_lem1le} in fact holds for all $\tau \in [s,T]$ and we can let $n \to \infty$ to get \eqref{tdli_estdkr}.
\end{proof}

The following result provides a crucial decay estimates for the derivatives of a solution to a linear transport-diffusion equation whose differential operator is the adjoint of the one of the FPK equation.

\begin{prop} \label{tdli_proppret}
Let $N \in \N$. Consider $A$ and $B$ as in \Cref{tdli_lem:kfp}, and let $G \in C^3(\R^N)$ and $F \in C^{0}([0,T];C^2(\R^N))$.
Suppose that $D_k A^{ij} = 0$ unless $i=j=k$.
Let $\beta \in \ell^{1}(\bb Z;\R_+)$ be even and such that $\beta \star \beta \leq c\beta$; that is,
\[
\sum_{j \in \Z} \beta^j \beta^{i-j} \leq c \beta^i \quad \forall\, i \in \Z. 
\]
Suppose there exist constants $c_B,c_F,c_G\geq0$ such that, for all $i,j,k,l \in \DSet{N}$,
\begin{align}
\label{tdli_e1} {\norm{D_j B^{i}}_\infty} &\leq c_B{\beta^{j-i}}\,, &  {\norm{D_{jk} B^{i}}_\infty} &\leq c_B( \beta^{j-i} \wedge \sqrt{\beta^{j-i}\beta^{k-i}}\,)\,, \\
\label{tdli_e3}  {\norm{D_j F}_\infty} &\leq c_F \beta^j, & \norm{D_{jk}^2 F}_\infty &\leq c_F ({\beta^j} \wedge \sqrt{\beta^j\beta^k}\,)\,, \\
\label{tdli_e2} {\norm{D_j G}_\infty} &\leq c_G \beta^j\,, & \norm{D^2_{jk} G}_\infty \vee \norm{D^3_{jkl} G}_\infty& \leq c_G ({\beta^j} \wedge \sqrt{\beta^j\beta^k}\,)\,.
\end{align}
Then a classical solution $w \in C^0([0,T];C^3(\R^N))$ to
\begin{equation} \label{tdli_dualkfp}
\begin{cases}
-\de_t w - \tr(AD^2 w) + \pair{B}{Dw} = F & \text{on}\ (0,T) \times \R^N \\
w|_{t=T}= G
\end{cases}
\end{equation}
satisfies
\begin{equation} \label{NST_estonw}
{\norm{D_j w}_\infty} \leq K\beta^j, \quad {\norm{D^2_{jk} w}_\infty + \norm{D^3_{jkl} w}_\infty} \leq K ({\beta^j} \wedge \sqrt{\beta^j \beta^k}\,)
\end{equation}
for all $j,k,l \in \{0,\dots,N-1\}$,
where the constant $K$ depends only on $T$, $c$, $c_B$, $c_F$, $c_G$, $\norm{\beta}_1$ and $c_A \defeq \max_{1 \leq \ell \leq 3} \sup_k \norm{(D_k)^\ell A^{kk}}_\infty$. More precisely, one can choose $K = c_G + c_F \tilde K$ with $\tilde K$ vanishing as $T \to 0$. 
\end{prop}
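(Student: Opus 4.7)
The plan is to argue by duality with the Fokker--Planck--Kolmogorov equation~\eqref{tdli_kfp}, exploiting the estimate $\sup_k\int_s^T\!\int|D_k\rho|\leq C\sqrt T$ from \Cref{tdli_lem:kfp} as the mechanism that allows one to trade each ``extra'' derivative of $w$ arising in a commutator for a factor of $\sqrt T$ times a derivative of $\rho$, via integration by parts in $x$. Letting $\rho=\rho_\epsilon(\cdot\,;s,y)$ solve \eqref{tdli_kfp}, I would differentiate~\eqref{tdli_dualkfp} once, twice and three times; at each order $D^\alpha w$ satisfies a backward parabolic equation with the same principal part as \eqref{tdli_dualkfp} and a source collecting $D^\alpha F$, the terminal datum $D^\alpha G$, and all commutators coming from $A$ and $B$. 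Testing against $\rho$ and sending $\epsilon\to0$ yields a Duhamel-type identity expressing $D^\alpha w(s,y)$ as $\int D^\alpha G\,\rho(T,\cdot)$ plus the space-time integral of the source terms against $\rho$.

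For $D_kw$ the source reads $D_kF + D_kA^{kk}D^2_{kk}w - \sum_j D_kB^j D_jw$, the diagonality hypothesis on $D_kA$ killing all off-diagonal $A$-contributions. I would bound $\int D_kG\,\rho(T)$ by $c_G\beta^k$ and $\iint D_kF\,\rho$ by $Tc_F\beta^k$ directly; the offending $D^2_{kk}w$-term I integrate by parts in $x^k$, writing
\[
\int D_kA^{kk}D^2_{kk}w\,\rho = -\int D^2_{kk}A^{kk}\,D_kw\,\rho \;-\; \int D_kA^{kk}\,D_kw\,D_k\rho\,,
\]
which by \Cref{tdli_lem:kfp} is $\leq(Tc_A+C\sqrt T\,c_A)\|D_kw\|_\infty$; the coupling $\sum_j D_kB^j D_jw$, under the bootstrap ansatz $\|D_jw\|_\infty\leq M_1\beta^j$, is bounded by $Tc_BM_1c\beta^k$ via $\beta\star\beta\leq c\beta$. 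For $T$ small the coefficient of $M_1$ is below $1$ and absorbed, giving the first-order bound with the structure $K=c_G+c_F\tilde K(T)$. The second- and third-order estimates are handled by the same scheme, under the joint ansatz $\|D^2_{jk}w\|_\infty,\|D^3_{jkl}w\|_\infty\leq M_2,M_3\cdot(\beta^j\wedge\sqrt{\beta^j\beta^k})$: every commutator involving a $w$-derivative of order above what the bootstrap controls is integrated by parts against $\rho$, producing the $\sqrt T$ gain. Crucially, at the third-derivative level we have no $C^3$ bounds on $F,B,A$, so \emph{every} such source term must be integrated by parts once; this is exactly why the target weight $\beta^j\wedge\sqrt{\beta^j\beta^k}$ carries no $\beta^l$-dependence, the $l$-regularity being precisely what is spent in that integration.

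The main difficulty lies in the convolution bookkeeping: after summation over coupling indices, each source term must be shown to be controlled by $\beta^j\wedge\sqrt{\beta^j\beta^k}$ up to constants depending only on $c$ and $\|\beta\|_1$. Some sums follow from $\beta\star\beta\leq c\beta$ directly, others (those involving $\sqrt{\beta^{j-i}\beta^{k-i}}$) from Cauchy--Schwarz in the form $\sum_i\sqrt{\beta^{j-i}\beta^i}\sqrt{\beta^{k-i}\beta^i}\leq c\sqrt{\beta^j\beta^k}$, while hybrid sums mixing $\beta^{j-i}$ with $\sqrt{\beta^i}$ are controlled using $\beta\in\ell^1$. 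Once all such estimates are in place, the coupled inequalities for $M_1,M_2,M_3$ form a linear system with coefficients of size $O(T)+O(\sqrt T)$, which can be absorbed for $T$ small enough to yield \eqref{NST_estonw} with the claimed structure of $K$; for general $T$ one concludes by iterating the short-time estimate on sub-intervals of length $<T^*$.
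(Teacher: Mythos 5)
Your proposal is correct and rests on the same essential mechanism as the paper's proof: differentiate \eqref{tdli_dualkfp}, test against the FPK adjoint $\rho_\epsilon$ from \Cref{tdli_lem:kfp}, trade excess $w$-derivatives for $D\rho$ via integration by parts, invoke the $\sqrt{T}$ bound on $\sup_k\int\!\!\int|D_k\rho|$, and do the convolution bookkeeping via $\beta\star\beta\leq c\beta$ and Cauchy--Schwarz. The one genuine structural difference is in how the coupled estimates are closed. The paper does \emph{not} integrate the diffusion commutator by parts at orders one and two: it bounds $\int D_kA^{kk}D^2_{kk}w\,\rho$ directly by $(T-s)C_A$ times the (as-yet-unproved) weighted $D^2w$ norm, and similarly bounds the $D^3w$ contributions at order two, obtaining the chain \eqref{NST_estD1}--\eqref{NST_estD2} of the form $M_1\lesssim 1+M_2$, $M_2\lesssim 1+M_3$; only at order three, where it is forced both by the $D^4w$-terms from $A$ and by $B,F$ being merely $C^2$, does it integrate by parts and invoke \Cref{tdli_lem:kfp}, closing $M_3$ and unwinding the triangular system. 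You instead integrate the diffusion term by parts at every order, so each $M_n$ closes immediately in terms of $M_{n-1}$; this produces a lower-triangular system and invokes \Cref{tdli_lem:kfp} from the very first estimate, which is harmless since that lemma's smallness condition on $T$ is needed anyway. Both routes are valid and give the same dependence of $K$; your remark that the $l$-index regularity is precisely what is ``spent'' in the third-order integration by parts, explaining why the target weight carries no $\beta^l$, is exactly the structural point the paper exploits in \eqref{NST_aboverhs}.
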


Note that recalling \Cref{NST_exa2} the above bounds \eqref{tdli_e1}, \eqref{tdli_e3} \eqref{tdli_e2} and \eqref{NST_estonw} can be expressed in terms of the norms of suitable spaces among those introduced \Cref{NST_secholsp}. This allows a more compact notation, which will be used in the following \Cref{NST_secexu}; nevertheless, here we agreed to write the estimates in a more explicit form, for the benefit of the reader who needs to get used to the kind of controls we wish to eventually have on the derivatives of the data and the solution of the Nash system.

\begin{proof}[Proof of \Cref{tdli_proppret}]
The following computations are performed assuming $w$ to be smooth; nevertheless, by means of a standard approximation argument, one can prove that the estimates we get hold for $w$ as regular as in the statement of this proposition.

\emph{First-order estimates.} Testing the equation of $D_kw$ by $\rho_\epsilon$ solving \eqref{tdli_kfp} and letting $\epsilon \to 0$, after easy computations one gets
\[ \begin{split}
&\norm{D_k w(s,\var)}_{\infty} \\
&\qquad \leq \norm{D_k G}_\infty + (T-s)\bigg( c_B \sup_i \frac{\norm{D_i w}_{\infty;[s,T]}}{\beta^i} \sum_{j \in \DSet N} \beta^{k-j} \beta^j + \norm{D_k F}_{\infty;[s,T]} \bigg) \\
&\qquad \quad + \int_s^T \biggl| \int_{\R^N} D_k A^{kk} D^2_{kk} w \rho \biggr|\,;
\end{split}
\]
here $\norm{\var}_{\infty;[s,T]} \defeq \sup_{[s,T] \times \R^N} |\var|$ and $\rho(t) \defeq \lim_{\epsilon \to 0} \rho_\epsilon(t) \in C^0(\R^d)$, and we have used \eqref{tdli_e1}. The last integral is obtained exploiting our assumption on $A$ and 
which is controlled by
\[
(T-s) C_A \sup_{ij} \frac{\norm{D^2_{ij} w}_{\infty;[s,T]}}{\beta^i \wedge \sqrt{\beta^i\beta^j}}\,\beta^k,
\]
with $C_A \defeq \sup_k \norm{D_k A^{kk}}_\infty$.
Then, if $T-s$ small enough, using that $\beta \star \beta \leq c\beta$ we deduce from the above estimates that
\begin{equation} \label{NST_estD1}
\sup_k \frac{\norm{D_k w}_{\infty;[s,T]}}{\beta^k} \leq c_1(T-s)\biggl(1+\sup_{ij} \frac{\norm{D^2_{ij} w}_{\infty;[s,T]}}{\beta^i \wedge \sqrt{\beta^i\beta^j}}\biggr),
\end{equation}
where $c_1(T-s)$ is an increasing function of $T-s$ that can be explicitly written using the parameters $c, C_A, c_B, c_F, c_G, \norm{\beta}_1$.

\textit{Second order estimates.} Testing the equation of $D_{jk}^2 w$ by $\rho$ one gets
\begin{equation} \label{NST_estD2torec}
\begin{split}
&\norm{D^2_{jk} w(s,\var)}_{\infty} \\
&\quad \leq \norm{D^2_{jk} G}_\infty + (T-s) \norm{D_{jk}^2 F}_{\infty;[s,T]} + \int_s^T \!\! \int_{\R^N} \bigl| D^2_{jk}\pair{B}{Dw} -  \pair{B}{D^2_{jk}D w} \bigr| \rho \\
&\qquad + \int_s^T \biggl| \int_{\R^N} \bigl( D^2_{jk} A^{jj} D^2_{jj} w + D_{j} A^{j j} D^3_{j jk} w + D_{k} A^{k k} D^3_{jkk} w \bigr) \rho \biggr|\,,
\end{split}
\end{equation}
where in particular $D^2_{jk} A^{jj} = 0$ if $j \neq k$. It easy to see that the last integral is controlled by
\[
3(T-s)C_A' \Bigl( \sup_{il} \frac{\norm{D^2_{il} w}_{\infty;[s,T]}}{\beta^i \wedge \sqrt{\beta^i\beta^l}} + \sup_{ilm} \frac{\norm{D^3_{ilm} w}_{\infty;[s,T]}}{\beta^i \wedge \sqrt{\beta^i\beta^l}} \Bigr) (\beta^j \wedge \sqrt{\beta^j \beta^k}\,),
\]
with $C_A' \defeq C_A + \sup_k \norm{D_{kk} A^{kk}}_\infty$.
On the other hand, using \eqref{tdli_e1} and the property $\beta \star \beta \leq c \beta$, one can estimates in two ways, according to whether one uses that $|D^2_{jk} B^i| \leq c_B\beta^j$ or $|D^2_{jk} B^i| \leq c_B\sqrt{\beta^j\beta^k}$: we have either
\[ \begin{split}
&\int_{\R^N} \bigl| D^2_{jk}\pair{B}{Dw}
-  \pair{B}{D^2_{jk}D w} \bigr| \rho \\
& \qquad \leq c_B \biggl(\, \sup_i \frac{\norm{D_i w}_{\infty;[s,T]}}{\beta^i} \cdot \sum_i \beta^{j-i} \beta^i \\
& \qquad \qquad \quad + \sup_{ij} \frac{\norm{D^2_{ij} w}_{\infty;[s,T]}}{\beta^i} \cdot \Bigl( \norm{\beta}_1 \beta^j + \sum_i \beta^{j-i} \beta^i \Bigr) \biggr) \\
& \qquad \leq c_B \biggl( c \sup_i \frac{\norm{D_i w}_{\infty;[s,T]}}{\beta^i} + (\norm{\beta}_1 +c) \sup_{ij} \frac{\norm{D^2_{ij} w}_{\infty;[s,T]}}{\beta^i} \biggr) \beta^j,
\end{split}
\]
or, exploiting the Cauchy-Schwarz inequality,
\[ \begin{split}
&\int_{\R^N} \bigl| D^2_{jk}\pair{B}{Dw} -  \pair{B}{D^2_{jk}D w} \bigr| \rho \\
& \qquad \leq c_B \sup_i \frac{\norm{D_i w}_{\infty;[s,T]}}{\beta^i} \cdot \sum_i (\beta^{j-i})^{\frac12} (\beta^{k-i})^{\frac12} \beta^i \\
& \qquad \quad + c_B \sup_{ij} \frac{\norm{D^2_{ij} w}_{\infty;[s,T]}}{\sqrt{\beta^i \beta^j}} \cdot \Bigl( \,\sum_i \beta^{k-i} (\beta^{j})^{\frac12} (\beta^i)^{\frac12} + \sum_i \beta^{j-i} (\beta^{k})^{\frac12} (\beta^i)^{\frac12} \Bigr) \\
& \qquad \leq \biggl( c c_B \sup_i \frac{\norm{D_i w}_{\infty;[s,T]}}{\beta^i} + 2c_B\sqrt{c \norm{\beta}_1} \sup_{ij} \frac{\norm{D^2_{ij} w}_{\infty;[s,T]}}{\sqrt{\beta^i \beta^j}} \biggr) \sqrt{\beta^j \beta^k}\,.
\end{split}
\]
Therefore, also using \eqref{NST_estD1}, we deduce that, for small $T-s$,
\begin{equation} \label{NST_estD2}
\sup_{jk} \frac{\norm{D^2_{jk} w}_{\infty;[s,T]}}{\beta^j \wedge \sqrt{\beta^j\beta^k}} \leq c_2(T-s) \biggl(1+\sup_{ijk} \frac{\norm{D^2_{ijk} w}_{\infty;[s,T]}}{\beta^i \wedge \sqrt{\beta^i\beta^j}}\biggr),
\end{equation}
where $c_2(T-s)$ is an increasing function of $T-s$ depending also on the parameter $C_A'$ and those listed for $c_1$.

\textit{Third-order estimates.} Testing the equation of $D_{jkl}^3 w$ by $\rho_\epsilon$ one gets
\[ \begin{split}
&\norm{D^3_{jkl} w(s,\var)}_{\infty} \leq \norm{D^3_{jkl} G}_\infty \\
&\quad + \lim_{\epsilon\to0} \biggl( \biggl| \int_s^T \int_{\R^N} \bigl( D^3_{jkl}\pair{B}{Dw} -  \pair{B}{D^3_{jkl}D w} \bigr) \rho_\epsilon \biggr| + \biggl| \int_s^T \int_{\R^N} D_{jkl}^3 F \rho_\epsilon \biggr| + \call A_\epsilon \biggr),
\end{split}
\]
where $\call A_\epsilon$ collects all terms involving derivatives of $A$. By analogous estimates as above, it is not difficult to see that all terms in $\call A$ where no fourth-order derivatives of $w$ appear can be controlled by
\[
(T-s)C_A''\norm{\beta}_\infty \Bigl( \sup_{il} \frac{\norm{D^2_{il} w}_{\infty;[s,T]}}{\beta^i \wedge \sqrt{\beta^i\beta^l}} + \sup_{ilm} \frac{\norm{D^3_{ilm} w}_{\infty;[s,T]}}{\beta^i \wedge \sqrt{\beta^i\beta^l}} \Bigr) (\beta^j \wedge \sqrt{\beta^j \beta^k}\,),
\]
with $C_A'' \defeq C_A' + \sup_k \norm{D_{kkk} A^{kk}}_\infty$.
This excludes three terms, which are of the form
\[
\int_s^T \biggl| \int_{\R^N} \! D_j A^{jj} D^4_{jjkl} w \rho_\epsilon \biggr| \leq \int_s^T \biggl| \int_{\R^N} \! D^2_{jl} A^{jj} D^3_{jjk} w \rho_\epsilon \biggr| + \int_s^T \biggl| \int_{\R^N} \! D_j A^{jj} D^3_{jjk} w D_l \rho_\epsilon \biggr|
\]
for any permutation of $j,k,l$, and thus, using also \Cref{tdli_lem:kfp}, as $\epsilon \to 0$ they are controlled by
\[
\sqrt{T-s}\, \bigl(C_A' + \sqrt{T-s}\, C_A'' C \bigr) \sup_{ilm} \frac{\norm{D^3_{ilm} w}_{\infty;[s,T]}}{\beta^i \wedge \sqrt{\beta^i\beta^l}} (\beta^j \wedge \sqrt{\beta^j \beta^k}\,).
\]
On the other hand, we have
\begin{equation} \label{NST_aboverhs}
\begin{split}
& \biggl| \int_{\R^N} \bigl( D^2_{jkl}\pair{B}{Dw} -  \pair{B}{D^3_{jkl}D w} \bigr) \rho_\epsilon(t,\cdot) \biggr| \\
& \qquad \leq \sum \bigl( \norm{\pair{D_j B}{D^2_{kl} Dw}}_{\infty;[s,T]} + \norm{\pair{D_{jk}^2 B}{D_{l} Dw}}_{\infty;[s,T]} \bigr) \\
& \qquad \quad - \norm{\pair{D_{jk}^2 B}{D_{l} Dw}}_{\infty;[s,T]} + \norm{\pair{D^2_{jk} B}{Dw}}_{\infty;[s,T]} \int_{\R^N} \!|D_l \rho_\epsilon(t,\cdot)|\,,
\end{split}
\end{equation}
where the sums are over cyclic permutations of the indices $j,k,l$. The basic idea to derive \eqref{NST_aboverhs} is using integration by parts to move a derivative from $B$ to $\rho_\epsilon$ when otherwise the order of the derivative applied to $B$ would be greater than $2$. Then using \eqref{tdli_e1}, the property $\beta \star \beta \leq c \beta$ and \Cref{tdli_lem:kfp}, one can show that the integral of the right-hand side of \eqref{NST_aboverhs}, as $\epsilon\to0$, is controlled either by
\[
\begin{split}
\sqrt{T-s}\, c_B \biggl( &\sqrt{T-s}\,(\norm{\beta}_1 + 2c) \sup_{ilm} \frac{\norm{D^3_{ilm} w}_{\infty;[s,T]}}{\beta^i} \\ 
& + \sqrt{T-s}\, (\norm{\beta}_1 + c) \sup_{il} \frac{\norm{D^2_{il} w}_{\infty;[s,T]}}{\beta^i}
+ Cc \sup_i \frac{\norm{D_i w}_{\infty;[s,T]}}{\beta^i}  \biggr) \beta^j,
\end{split}
\]
or, exploiting the Cauchy--Schwarz inequality, by
\[
\begin{split}
\sqrt{T-s}\,c_B \biggl( &3\sqrt{(T-s)c \norm{\beta}_1} \sup_{ilm} \frac{\norm{D^3_{ilm} w}_{\infty;[s,T]}}{\sqrt{\beta^i \beta^l}} \\
&+ 2\sqrt{T-s}\,c \sup_{il} \frac{\norm{D^2_{il} w}_{\infty;[s,T]}}{\beta^i} + Cc \sup_i \frac{\norm{D_i w}_{\infty;[s,T]}}{\beta^i}  \biggr) \sqrt{\beta^j \beta^k}.
\end{split}
\]
Finally, integrating by parts and using \eqref{tdli_estdkr} and \eqref{tdli_e3},
\[
\lim_{\epsilon\to0}\,\biggl| \int_s^T \int_{\R^N} D_{jkl}^3 F \rho_\epsilon \biggr| \leq Cc_F\sqrt{T-s}\, (\beta^j \wedge \sqrt{\beta^j \beta^k}\,).
\]
Collecting the estimates we have proved and also using \eqref{NST_estD1} and \eqref{NST_estD2} we can now obtain that
\begin{equation} \label{NST_estD3}
\sup_{ijk} \frac{\norm{D^3_{ijk} w}_{\infty;[s,T]}}{\beta^i \wedge \sqrt{\beta^i \beta^j}} \leq c_3(T-s), 
\end{equation}
for some $c_3$ increasing in $T-s$ and depending only on  $C_A''$, $C$ and the same parameters already listed above.

Plugging \eqref{NST_estD3} back into \eqref{NST_estD2} and the resulting estimate into \eqref{NST_estD1}, we conclude that there exists $\frk c(T-s)$ such that
\[
\sup_{i} \frac{\norm{D_{i} w}_{\infty;[s,T]}}{\beta^i} + \sup_{ij} \frac{\norm{D^2_{ij} w}_{\infty;[s,T]}}{\beta^i \wedge \sqrt{\beta^i \beta^j}} + \sup_{ijk} \frac{\norm{D^3_{ijk} w}_{\infty;[s,T]}}{\beta^i \wedge \sqrt{\beta^i \beta^j}} \leq \frk c(T-s),
\]
provided that $T-s$ is small enough, depending on $c, C_A'', c_B, c_F, c_G, \norm{\beta}_1, C$. It is now standard to iterate these estimates a finite number of times and get the desired ones on the whole interval $[0,T]$.
\end{proof}

Using the bounds we have obtained, it is also possible to deduce useful H\"older estimates in time of the space derivatives up to the second-order, as we state here below.

\begin{cor} \label{NST_cord2lipt}
Under the assumptions of \Cref{tdli_proppret}, one also has $w \in C^{\frac12}([0,T];C^2(\R^N))$, with
\begin{gather*}
\norm{w(s,\var) - w(t,\var)}_\infty \leq \tilde K'|t-s|^{\frac12},
\\
\norm{D^2_{j}w(s,\var) - D^2_{j}w(t,\var)}_\infty + \norm{D^2_{jk}w(s,\var) - D^2_{jk}w(t,\var)}_\infty  \leq \tilde K'|t-s|^{\frac12} \sqrt{\beta^j},
\end{gather*}
for all $t,s \in [0,T]$, $j,k \in \DSet N$; the constant $\tilde K'$ depends only on $\sup_j \norm{A^{jj}}_\infty$, $\sup_j \norm{B^j}_\infty$, $\norm{\beta}_{1}$, $\norm{F}_\infty$, $\tilde K$ and the parameters thereof. Furthermore, one can choose $\tilde K'=C\tilde K$ with $C$ linearly depending on $c_B$.
\end{cor}

\begin{proof}
The differentiability in time of $Dw$ comes directly from the equation. We prove now only the H\"older estimate for the second-order derivatives; the other ones are obtained in an analogous manner.
Testing over $(s,t)$ the equation of $D^2_{jk} w$ by $\rho_\epsilon$ (solving \eqref{tdli_kfp}, with arbitrary $y \in \R^N$) and letting $\epsilon \to 0$, the analogue of \eqref{NST_estD2torec} one obtains, after using all estimates on the data and $w$, is
\[
\biggl| D^2_{jk} w(s,y) - \int_{\R^N} D^2_{jk} w(t,\var)\,\di\rho(t) \biggr| \lesssim (t-s)(\beta^j \wedge \sqrt{\beta^j\beta^k}),
\]
with implied constants depending only on $\tilde K$ and the parameters thereof. Therefore,
\begin{equation} \label{est_D2hol}
 \begin{split}
&\bigl| D^2_{jk} w(s,y) - D^2_{jk} w(t,y) \bigr| 
\\
&\qquad\quad \lesssim (t-s)(\beta^j \wedge \sqrt{\beta^j\beta^k}) + \biggl| \int_{\R^N} D^2_{jk} w(t,\var)\,\di\rho(t) - D^2_{jk} w(t,y)\biggr| \\
&\qquad\quad \lesssim \biggl( (t-s) + \sum_{1 \leq l \leq N} \sqrt{\beta^l} \, \int_{\R^N} |\var^l - \, y^l|\, \di\rho(t) \biggr) \sqrt{\beta^j}.
\end{split}
\end{equation}
As $A$ and $B$ are bounded, we can test \eqref{tdli_kfp} by $|\var^l - y^l|^2$ to get
\[
\int_{\R^N} |\var^l - \, y^l|^2\, \di\rho(t) = \int_s^t \int_{\R^N} A^{ll} \,\di\rho - 2 \int_s^t \int_{\R^N} B^\ell (\var^l - \, y^l)\,\di\rho,
\]
so that Young's inequality and Gronwall's lemma yield
\[
\int_{\R^N} |\var^l - \, y^l|^2\, \di\rho(t) \leq (t-s)(\norm{A^{ll}}_\infty + \norm{B^l}_\infty^2)e^T.
\]
Using H\"{o}lder's inequality in \eqref{est_D2hol}, we obtain the desired estimate.
\end{proof}

\section{Existence and uniqueness for the Nash system} \label{NST_secexu}

We first prove that, under suitable bounds on the Hamiltonian, the infinite-dimensional Nash system can have at most one classical solution, according to \Cref{NST_csoldef}.

\begin{thm} \label{NST_thmex}
Let $A \colon \R^\omega_T \to \ell^\infty(\N^2)$ be uniformly positive on $\ell^2(\N)$, with $D_i A^{hk}$ and $D^2_{ij}A^{hk}$ uniformly bounded for all $i,j,h,k \in \N$, and $D_k A^{ij} = 0$ if $i \neq k \neq j$. Let $H \colon \R^\omega_T \times \R^\omega \to \R$ be such that $\de_{p^i} H^i(t,\var)$ is twice differentiable in $\R^\omega \times \R^\omega$,
with
\[
\sup_{ik} \sum_j \bigl( |\de_{p^j} H^i| + |D_j \de_{p^j} H^i| + |\de^2_{p^jp^k} H^i| + |D_j \de^2_{p^jp^k} H^i| + |\de^3_{p^jp^kp^i} H^i| \bigr) < \infty
\] 
uniformly in $\R^\omega_T \times \Omega$ with $\Omega \subset \R^\omega$ $\infty$-bounded.\footnote{Recall that this means that $\sup_\Omega |\var|_\infty \defeq \sup_{p\in\Omega} \sup_i |p^i| < \infty$.}
Then there exists at most one classical solution to the Nash system~\eqref{tdli_ns}.\end{thm}

\begin{proof}
Let $u$ and $v$ be two classical solutions as in the statement. Let $w=u-v$, so that it solves
\[\begin{split}
&-\de_t w^i - \sum_{jk} A^{jk} D^2_{jk} w^i + \Bigl( \int_0^1 \de_p H^i(\call D(su+(1-s)v))\,\di s \Bigr) \cdot \call D w \\ 
&+ \sum_{j\neq i} \de_{p^j} H^j(\call Du) D_j w^i
+ \sum_{j\neq i} \Bigl( \int_0^1 \de^2_{pp^j} H^j(\call D(su+(1-s)v))\,\di s \Bigr) \cdot \call Dw \, D_jv^i = 0
\end{split}
\]
for each $i \in \N$, with terminal condition $w|_{t=T} = 0$.
Let now $\rho^i$ solve \eqref{tdli_kfp} where
\[
B^j = \begin{dcases}
\de_{p^j}H^j(\call Du) & \text{if $j \neq i$} \\
\int_0^1 \de_{p^i}H^i(\call D(su+(1-s)v))\,\di s & \text{if $j=i$};
\end{dcases}
\]
for the sake of simplicity we will do the computation assuming that $\rho(s) = \delta_y$, nevertheless what follows is to be understood in the limit as $\epsilon \to 0$.
Testing the equation for $w^i$ by $\rho^i$ over $(s,T)$ we get, with the notation $x_{-N} \defeq (x^j)_{j \geq N}$,
\[ \begin{split}
&w^i(s,y,x_{-N}) \\
&= \sum_{j \neq i} \int_s^T \int_{\R^N} \Big(  \Bigl( \int_0^1 \de_{p^j} H^i(\call D(s'u+(1-s')v))\,\di s' \Bigr) D_j w^j \Big)\Big|_{(t,(z,x_{-N}))} \rho^i(t,z)\,\di z\di t \\
&\quad +  \sum_{j \neq i} \int_s^T \int_{\R^N} \Big( \sum_k \Bigl( \int_0^1 \de^2_{p^kp^j} H^j(\ast)\,\di s' \Bigr) D_kw^k D_jv^i \Big)\Big|_{(t,(z,x_{-N}))} \rho^i(t,z)\,\di z\di t \\
&\quad + \sum_{j\geq N} \int_s^T \int_{\R^N} \Big( 2 \sum_k A^{jk} D^2_{jk}w^i + \de_{p^j} H^j(\call D u) D_j w^i \Big)\!\Big|_{(t,(z,x_{-N}))} \rho^i(t,z)\,\di z\di t,
\end{split}
\]
where for the sake of brevity we used $\ast$ as a placeholder for $\call D(s'u+(1-s')v)$.
As long as $j<N$, integrating by parts one finds
\[ \begin{split}
&- \int_{\R^N} \!\Big(  \Bigl( \int_0^1 \de_{p^j} H^i(\ast)\,\di s' \Bigr) D_j w^j \Big)\Big|_{(t,(z,x_{-N}))} \rho^i(t,z)\,\di z\di t \\
&= \int_{\R^N} \!\Big( \Bigl( \int_0^1 \de_{p^j} H^i(\ast)\,\di s' \Bigr) w^j \Big)\Big|_{(t,(z,x_{-N}))} D_j \rho^i(t,z)\,\di z\di t \\
&\quad + \int_{\R^N} \!\Big(  \Bigl( \int_0^1 D_j \de_{p^j} H^i(\ast)\,\di s' \Bigr) w^j \Big)\Big|_{(t,(z,x_{-N}))} \rho^i(t,z)\,\di z\di t \\
&\quad + \int_{\R^N} \!\Big( \sum_\ell \Bigl( \int_0^1 \de^2_{p^\ell p^j} H^i(\ast)\,\di s' \Bigr) D^2_{j\ell}(s u^\ell + (1-s)v^\ell) \,w^j \Big)\Big|_{(t,(z,x_{-N}))} \rho^i(t,z)\,\di z\di t;
\end{split}
\]
analogously,
\[ \begin{split}
&- \int_{\R^N} \Big( \Bigl( \int_0^1 \de^2_{p^kp^j} H^j(\ast)\,\di s' \Bigr) D_kw^k D_jv^i \Big)\Big|_{(t,(z,x_{-N}))} \rho^i\,\di z\di t \\
&= \int_{\R^N} \Bigl(\Bigl( \int_0^1 \de^2_{p^kp^j} H^j(\ast)\,\di s' \Bigr) w^k D^2_{jk} v^i \Bigr)\Bigr|_{(t,(z,x_{-N}))} \rho^i\,\di z \\
&\quad+ \int_{\R^N} \Bigl( \Bigl( \int_0^1 \de^2_{p^kp^j} H^j(\ast)\,\di s' \Bigr) w^k D_{j} v^i \Bigr)\Bigr|_{(t,(z,x_{-N}))} D_k\rho^i\,\di z \\
&\quad+ \int_{\R^N} \Bigl( \Bigl( \int_0^1 D_k \de^2_{p^kp^j} H^j(\ast)\,\di s' \Bigr) w^k D_{j} v^i \Bigr)\Bigr|_{(t,(z,x_{-N}))} \rho^i\,\di z \\
&\quad + \int_{\R^N} \!\!\Big( \sum_{\ell} \Bigl( \int_0^1 \de^3_{p^\ell p^kp^j} H^j(\ast)\,\di s' \Bigr) D^2_{k\ell} (s u^\ell + (1-s)v^\ell) \,w^k D_j v^i \Big)\Big|_{(t,(z,x_{-N}))} \rho^i\,\di z\,,
\end{split}
\]
where $\rho^i = \rho^i(t,z)$.
Therefore, using \Cref{tdli_lem:kfp} and the properties required to classical solutions, we deduce that for $T-s < T^*$
\[
\sup_i\,\norm{w^i}_{\infty;[s,T]} \lesssim\, \sqrt{T-s}\, \Bigl( \sup_i\, \norm{w^i}_{\infty;[s,T]} + \varepsilon_N \Bigr)
\]
with $\varepsilon_N \to 0$ as $N \to \infty$ and implied constant independent of $N$. It follows that if $s$ is sufficiently close to $T$ one has $w^i = 0$ for all $i$ (that is, $u=v$) on $[s,T] \times \R^\omega$.
Finally define $\hat s \defeq \min\{ s \in [0,T] :\ u=v \text{ on } [s,T] \times \R^\omega \}$; if $\hat s > 0$ then by the argument above with $\hat s$ in lieu of $T$ we get a contradiction, thus $\hat s = 0$ and the proof is complete.
\end{proof}

To prove existence, we solve a version of the Nash system~\eqref{tdli_ns} reduced onto $\R^N_T \defeq [0,T] \times \R^N$ and then pass to the limit as $N \to \infty$ to obtain a solution to the infinite-dimensional system, thanks to the stability provided by our estimates of \Cref{tdli_proppret}. This is going to be done by observing that letting $B_i(t,x) \defeq \hat B_i(t,x,\call D(t,x))$, with
\begin{equation} \label{NST_defBU}
\hat B^{j}_i(t,x,p) \defeq \begin{dcases}
\de_{p^j} H^j(t,x,p) & \text{if}\ j \neq i \\
\int_0^1 \de_{p^i} H^i(t,x,p^{-i}, sp^i) \,\di s & \text{if} \ j = i\,,
\end{dcases}
\end{equation}
and
\begin{equation} \label{NST_defFU}
F^i(t,x) \defeq H^i(t,x,\call Du^{-i}(t,x),0),
\end{equation}
the equations in \eqref{tdli_ns} take the form $-\de_t u^i - \tr(AD^2u^i) + \pair{B_i}{D u^i} = F^i$,
in such a way that they can be regarded as a system of linear transport-diffusion equations in the context of a fixed point argument.

When performing the reduction from $\R^\omega_T$ to $\R^N_T$, we will identify $\R^N \subset \R^\omega$ via $(x^0,\dots,x^{N-1}) \mapsto (x^0,\dots,x^{N-1},0,\dots)$. Given $V \colon \R^\omega \to \R$ we will consider the projection $V_N(y) \defeq V(y)$ for all $y \in \R^N$, while for $A \colon \R^\omega \to \ell^\infty(\N^2)$ the notation $A_N$ will mean that we have also restricted the infinite-dimensional matrix to $\R^N$; that is, $A_N(y) \defeq A(y)|_{\DSet N} = (A(y)^{ij})_{i,j \in \DSet N}$ for all $y \in \R^N$. Finally, the spaces defined in \Cref{NST_secholsp} are readily adapted to functions on $\R^N$, simply by replacing $\R^\omega$ with $\R^N$.

In the upcoming statement, which is our main theorem, we will make use of the notations $\R^\omega_x$ and $\R^\omega_p$ presented in the opening section; $\R^\omega_T$ will always mean $[0,T] \times \R^\omega_x$.

\begin{thm} \label{tdli_thmone}
Let $\beta \in \ell^{\frac12}(\Z,\R_+)$ be even and such that $\beta \star \beta \leq c\beta$; for any $i \in \N$, define $\beta_i \defeq \beta^{i-\var}$. Assume that the following hypotheses are fulfilled, uniformly in the parameters $i,j,k \in \N$:
\begin{itemize}[leftmargin=2em]
\item $A \colon \R^\omega_T \to \ell^\infty(\N^2)$ is uniformly positive on $\ell^2(\N)$, with
\[
A^{ij} = A^{ji} \in C^{\frac\gamma2,\gamma}(\R^\omega_T) \quad \text{for some $\gamma \in (0,1)$}\,,
\]
$D_k A^{ij} = 0$ unless $i=j=k$, and
\[
D_k A^{kk} \in C^{\frac\gamma2,\gamma}(\R^\omega_T) \cap C^0([0,T];C^{2}(\R^\omega))\,;
\]
\item $G^i \in C^{3+\gamma-}_{\beta_i}(\R^\omega)$;
\item for any $\infty$-bounded $\Omega \subset \R^\omega_p$,
\[
\de_{p^i} H^i \in C^0([0,T];C^2_{\beta_i}(\R^\omega_x \times \Omega)) \cap C^0(\R^\omega_T;C^{3-}_{\beta_i}(\Omega))\,;
\]
also,
\[
D_j \de_{p^i} H^i,\, \de_{p^j} \de_{p^i} H^i \in C^0(\Omega;C^{\frac\gamma2,\gamma}(\R^\omega_T))\,.
\]
\end{itemize}

Then there exists $T^*>0$ such that if $T < T^*$ the Nash system~\eqref{tdli_ns} has a unique classical solution $u$ (with $\beta_i$ defined as above), which also belongs to $\ell^\infty(\N;C^0([0,T];C^{2,1}_{\beta_i}(\R^\omega)) \cap C^{\frac12}([0,T];C^{2-}_{\sqrt{\beta_i}}(\R^\omega)))$.

\end{thm}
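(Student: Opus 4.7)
The plan is to execute the ``bottom-up'' strategy announced in the introduction: obtain $u$ as the $N\to\infty$ limit of classical solutions $u_N$ to truncated $N$-dimensional Nash systems on $\R^N_T$, relying on the index-wise a priori estimates of \Cref{tdli_proppret} and \Cref{NST_cord2lipt} to keep everything stable in $N$. Since uniqueness is already covered by \Cref{NST_thmex}, the core task is the construction of $u_N$ and the passage to the limit.

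For the first part, I would set up a short-time fixed-point argument on $\R^N_T$. Given $v=(v^i)_{i\in\DSet N}$ in a closed ball $\call X_{N,R}$ of $\ell^\infty(\DSet N;C^0([0,T];C^{2,1}_{\beta_i}(\R^N)))$ with the prescribed terminal values $G^i_N$, freeze the coefficients $B_i[v]$ and $F^i[v]$ as in \eqref{NST_defBU}--\eqref{NST_defFU}, solve the resulting linear backward parabolic equation for each $i\in\DSet N$ by standard parabolic theory, and call $\Phi_N(v)$ the output. The hypotheses on $H$ -- in particular $\de_{p^i}H^i\in C^2_{\beta_i}$ in $x$ and $C^{3-}_{\beta_i}$ in $p$ on $\infty$-bounded sets -- together with $\beta\star\beta\leq c\beta$ ensure that $B_i[v]$ and $F^i[v]$ fulfil the decay assumptions \eqref{tdli_e1}--\eqref{tdli_e2} of \Cref{tdli_proppret} with constants depending on $R$ but not on $N$. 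Hence, for $T$ small enough, the estimate $K=c_G+c_F\tilde K(T)$ provided by \Cref{tdli_proppret} forces $\Phi_N(\call X_{N,R})\subset\call X_{N,R}$, while the H\"{o}lder assumptions on $A$, $G$ and $H$ yield additional $C^{1+\frac\gamma2,2+\gamma}$ bounds affording the compactness needed for Schauder's fixed-point theorem. This produces a classical solution $u_N$ to the truncated Nash system.

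Next, I would extend each $u_N^i$ trivially to $\R^\omega_T$ and pass to the limit. \Cref{tdli_proppret} delivers, uniformly in $N$ and $i$, the decays $|D^\alpha u_N^i|\lesssim\beta_i^\alpha$ for $1\leq|\alpha|\leq 3$, and \Cref{NST_cord2lipt} supplies the Lipschitz-in-time control weighted by $\sqrt{\beta_i}$ on space derivatives up to second order. An Ascoli--Arzel\`{a} argument on each cylinder $[0,T]\times K$, $K\Subset\R^\omega$, extracts a subsequence converging to some $u$ locally uniformly up to second-order space derivatives. Since $\beta\in\ell^{\frac12}(\Z)\subset\ell^1(\Z)$, the weighted decays force the series appearing in \eqref{tdli_ns} to converge uniformly in $i$ both along the sequence and in the limit, so one can pass to the equations pointwise and obtain a classical solution in the sense of \Cref{NST_csoldef}. \Cref{NST_thmex} then yields uniqueness of $u$ and, a posteriori, convergence of the full family $(u_N)_N$.

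The main obstacle is the fixed-point step. The delicate verification is that differentiating $\de_{p^\ell}H^\ell(t,x,\call Dv(t,x))$ once or twice in $x$ produces mixed terms of the form $\sum_m|\de^2_{p^mp^\ell}H^\ell|\,|D^2_{jm}v^m|\lesssim\sum_m\beta^{\ell-m}\beta^{m-j}$, whose collapse into a single $\beta^{\ell-j}$ is precisely what the c-self-controlled bound $\beta\star\beta\leq c\beta$ provides. Without this structural property the inter-equation coupling would prevent the index-wise estimates from closing with constants independent of $N$, and the limiting procedure would break down; with it, the convolutive bounds are tamed, closing the fixed-point argument and making the passage to the limit possible.
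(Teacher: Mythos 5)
Your overall bottom-up strategy coincides with the paper's: freeze the drift $B_i[v]$ and source $F^i[v]$ from \eqref{NST_defBU}--\eqref{NST_defFU}, solve the linear transport-diffusion system on $\R^N_T$, run a fixed-point argument with constants stabilised in $N$ by \Cref{tdli_proppret} and \Cref{NST_cord2lipt}, extend to $\R^\omega_T$, pass to the limit by Ascoli--Arzelà, and invoke \Cref{NST_thmex} for uniqueness. Up to that level of description your outline is sound.

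The gap is in the choice of fixed-point theorem. You propose \emph{Schauder}, asserting that the extra $C^{1+\frac\gamma2,2+\gamma}$ Hölder bounds ``afford the compactness needed.'' This does not hold here: the spatial domain $\R^N$ is unbounded, so a ball in a higher-order Hölder space is \emph{not} compact in the lower-order sup-norm topology on $C^0([0,T];C^{2,1}_{\beta_i}(\R^N))$ -- Ascoli--Arzelà only yields compactness with respect to the topology of locally uniform (compact) convergence, which is strictly weaker than the norm topology you are implicitly working in. There is no uniform spatial decay in these problems (the data are merely bounded), so you cannot upgrade local compactness to global. Hence the ball $\call X_{N,R}$ fails to be precompact for $\Phi_N$, and Schauder's theorem does not apply as stated; nor do you address continuity of $\Phi_N$ in the weaker topology, which would be an additional obligation.

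The paper sidesteps this entirely by using the \emph{Banach} contraction principle, which requires only completeness of the ambient ball (true, by \Cref{NST_CmgB}) plus a contraction estimate. Concretely: the map $\call S$ is set on the closed subset $\call Y$ of $\bigl(C^0([0,T];C^{2,1}_{\beta_i}(\R^N))\cap C^{0,1}([0,T];C^{2-}_{\sqrt{\beta_i}}(\R^N))\bigr)^N$ with norm $\trinorm{\var}$; given $u,v\in\call Y$, the difference $\bar w=\call S(u)-\call S(v)$ satisfies a linear equation with zero terminal data and a source $\tilde F^i_N=\langle B_{i,N}[v]-B_{i,N}[u],\,D\call S(v)^i\rangle$ controlled as in \eqref{NST_formtF}, so $\sup_i\norm{\tilde F^i}_{C^0([0,T];C^2_{\beta_i})}\leq\kappa'_R\trinorm{u-v}$; applying \Cref{tdli_proppret} with $c_G=0$ gives $\sup_i\norm{\bar w^i}_{C^0([0,T];C^{3-}_{\beta_i})}\leq c_{\tilde F}\,\tilde K(T)$, and since $\tilde K(T)\to0$ as $T\to0$, one gets $\trinorm{\call S(u)-\call S(v)}\leq\tfrac12\trinorm{u-v}$ for $T\leq\bar T$ independent of $N$. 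You already identified the mechanism -- the vanishing-in-$T$ factor of \Cref{tdli_proppret} -- but attached it to invariance rather than contraction; redirecting it to the contraction estimate both closes the gap and removes any compactness requirement at the finite-$N$ stage. Compactness is used only \emph{after} that, in the $N\to\infty$ limit, where locally uniform convergence is exactly what is needed.
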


\begin{proof}
We only need to prove existence, since uniqueness will then follow because the hypotheses of \Cref{NST_thmex} are satisfied.

Let $N \in \N$ and let $\call Y$ be the closed subset of
\[
\bigl(C^0([0,T];C^{2,1}_{\beta_i}(\R^N)) \cap C^{\frac12}([0,T];C^{2-}_{\sqrt{\beta_i}}(\R^N))\bigr)^N
\]
of all $u$ such that
\begin{equation} \label{NST_boundY1'}
\sup_{i} \, \norm{u^i}_{C^0([0,T];C^{3-}_{\beta_i}(\R^N))} \leq R \quad\text{and}\quad \sup_{i} \, \norm{u^i}_{C^{\frac12}([0,T];C^{2-}_{\sqrt{\beta_i}}(\R^N))} \leq R',
\end{equation}
for some $R,R'>0$ to be determined.
We will denote by $\trinorm u$ the norm on $(C^0([0,T];C^{2,1}_{\beta_i}(\R^N)) \cap C^{\frac12}([0,T];C^{2-}_{\sqrt{\beta_i}}(\R^N)))^N$, defined as the sum of the two norms above. Given $u \in \call Y$, let $\call S$ be the map that associates to $u$ the solution $w$ to
\[
\begin{dcases}
-\de_t w^i - \sum_{j,k \in \DSet N} A^{jk}_N D^2_{jk} w^i + \sum_{j \in \DSet N} B^j_{i,N} D_jw^i = F^i_N & \text{in $\R^N_T$} \\
w^i|_{t=T} = G^i_N
\end{dcases} \quad i \in \DSet N\,,
\]
with $B$ and $F$ being defined as in \eqref{NST_defBU} and \eqref{NST_defFU}; here we have used the notation with the subscript $N$ that we have previously introduced. By \cite[Theorem~8.12.1]{kryHS}, $w$ and $Dw$ are of class $C^{1+\frac{\gamma'}2, 2+\gamma'}$, with $\gamma' \defeq \gamma \wedge \frac12$. Also, it is not difficult to see that there exists a constant $\kappa_R$ which depends only on $\beta$, $c_H$ and $R$ such that
\[
\sup_{ih} \bigl( \norm{B^{h}_i}_{C^0([0,T];C^2_{\beta_h}(\R^N))} + \norm{F^i}_{C^0([0,T];C^2_{\beta_i}(\R^N))}\bigr) \leq \kappa_R\,,
\]
so, by \Cref{tdli_proppret} and \Cref{NST_cord2lipt} one can choose $R>\sup_i \norm{G^i}_{3+\gamma-;\beta_i}$, $T$ sufficiently small and $R'$ large enough in such a way that, for all $N \in \N$, $\call S$ is well-defined with values in $\call Y$.

Consider now $u,v \in \call Y$ and denote by $B_{i,N}[u]$ and $B_{i,N}[v]$ the corresponding drifts;\footnote{That is, $B_{i,N}[u](t,x)$ is a short notation for $\hat B_{i,N}(t,x,\call Du(t,x))$, according to \eqref{NST_defBU}, and analogously for $v$.} then, letting $\bar w = \call S(u) - \call S(v)$ and $\tilde F^i_N = \pair{ B_{i,N}[v] -  B_{i,N}[u]}{D \call S(v)^i}$ we have
\[ \begin{dcases}
-\de_t \bar w^i - \sum_{jk} A^{jk}_N D^2_{jk} \bar w^i + \pair{ B_{i,N}[u]}{D\bar w^i} = \tilde F^i_N \\
\bar w|_{t=T} = 0.
\end{dcases}
\]
Note that
\begin{equation} \label{NST_formtF}
\tilde F^i = \sum_j \Bigl( \int_0^1 \de_p \hat B_{i,N}^j[su + (1-s)v] \,\di s \Bigr) \cdot \call D (u-v) \, D_j \call S(v)^i,
\end{equation}
hence, letting $c_{\tilde F} \defeq \sup_i \norm{\tilde F^i}_{C^0([0,T];C^2_{\beta_i}(\R^N))}$, we have $c_{\tilde F} \leq \kappa_R' \trinorm{u-v}$, with $\kappa'_R$ independent of $N$. In addition, by \Cref{tdli_proppret},
\[
\sup_i \norm{\bar w^i}_{C^0([0,T];C^{3-}_{\beta_i}(\R^N))} \leq c_{\tilde F} \tilde K\,.
\]
Then, also using \Cref{NST_cord2lipt}, we deduce that there exists $\bar T > 0$, independent of $N$, such that if $T \leq \bar T$ then $\trinorm{\call S(u) - \call S(v)} \leq \tfrac12 \trinorm{u-v}$; hence the contraction theorem yields the existence of a unique fixed point for $\call S$ in $\call Y$, which is by its definition the solution $u_N$ to the Nash system on $\R^N_T$ with data $A_N$, $H_N$ and $G_N$.


For any $\delta \in (0,\frac12)$, by the Ascoli--Arzelà theorem and a standard diagonal argument, up to a subsequence which is common to all $i \in \N$, each $(u_N^i)_{N\geq i}$ compactly converges in $C^0([0,T];C^{2+\delta}_{\beta_i}(\R^\omega)) \cap C^\delta([0,T];C^{2-}_{\sqrt{\beta_i}}(\R^\omega))$ to a function $u^i$ enjoying the bounds~\eqref{NST_boundY1'}.

Since $\beta \in \ell^{\frac12}(\Z)$, by the dominated convergence theorem
\begin{gather*}
\sum_{j,k< N} A^{jk}_N D^2_{jk} u_N^i \,\longrightarrow\, \sum_{j,k \in \N} A^{jk} D^2_{jk} u^i \,, \\
\sum_{\substack{j < N \\ j\neq i}} \de_{p^i} H^j_N(\call D u_N) D_j u_N^i \,\longrightarrow\, \sum_{\substack{j \in \N\\ j\neq i}} \de_{p^i} H^j(\call D u) D_j u^i
\end{gather*}
compactly in $C^0(\R^\omega_T)$ as $N \to \infty$. As also $H^i_N(\call Du_N) \to H^i(\call Du)$ compactly in $C^0(\R^\omega_T)$, by the equation we have that so $\de_t u_N^i$ compactly converges in $C^0(\R^\omega_T)$ as well. In particular, for any $x \in \R^\omega_T$, $\de_t u^i_N(\var,x)$ uniformly converges to some $v^i(\var,x)$ which is thus $\de_t u^i(\var,x)$ by the fundamental theorem of calculus. We conclude that $u$ solves \eqref{tdli_ns} pointwise. Furthermore, by the decay estimates on the derivatives we see that the series appearing in the equations converge in $C^0(\R^\omega_T)$. To complete the proof, we need to show that such a convergence is uniform in $i$.

Note that
\[
\sum_{0\leq j,k \leq N} \norm{D^2_{jk} u^i}_\infty \lesssim \biggl(\, \sum_{0\leq j \leq N} \gamma^{j-i} \biggr)^2,
\]
where we have set $\gamma^i \defeq \sqrt{\beta^i}$ and the implied constant is independent of $i \in \N$. The latter sum equals
\[ \begin{dcases}
\sum_{i-N \leq k \leq i} \gamma^k & \text{if $i \geq N$}
\\
\gamma^0 +\ 2 \sum_{1 \leq k \leq i \wedge (N-i)} \gamma^k \ + \sum_{i \wedge (N-i) < k \leq i \vee (N-i)} \gamma^k & \text{if $i < N$}.
\end{dcases}
\]
Since $(\gamma^k)_{k\geq 0}$ is summable, $\sum_{i-N \leq k \leq i} \gamma^k \to 0$ as $i \to \infty$, thus there exists $\bar \imath = \bar \imath(N) \geq N$ such that
$\sum_{i-N \leq k \leq i} \gamma^k \leq \sum_{0 \leq k \leq \bar \imath} \gamma^k$ for all $i \geq N$;
then one easily sees that
\[
\sum_{0\leq j,k \leq N} \norm{D^2_{jk} u^i}_\infty \lesssim \biggl(\, \sum_{0 \leq k \leq \bar\imath} \gamma^k \biggr)^2 \quad \forall\,i \in \N.
\]
Similarly, there exists $\tilde\imath = \tilde\imath(N) \geq N$ such that
\[
\sum_{0\leq j \leq N} \norm{D_j u^j D_j u^i}_\infty \,\lesssim\, \beta^0 \sum_{0\leq j \leq N} \beta^{j-i} \,\lesssim\, \sum_{0\leq k \leq \tilde\imath(N)} \beta^k \quad \forall\, i \in \N.
\]
Then the series appearing in the Nash system converge in $C^0(\R^\omega_T)$ uniformly in $i \in \N$.
\end{proof}

\begin{rmk}
Given the local nature (in time) of the result, and the consequent possibility to perform a fixed point argument to achieve it, we do not need to require any global boundedness of second and higher order $p$-derivatives of the Hamiltonians (as it would be expected, e.g., to prove well-posedness of the master equation in MFGs on arbitrary horizons). In fact, only some \emph{local} boundedness is needed, and this is in line with some analogous results on \emph{short-time} existence for the master equation in MFGs, such as \cite{CCPJEMS}.
\end{rmk}


\end{document}